\numberwithin{equation}{section}
\numberwithin{figure}{section}
  \theoremstyle{definition}
  \newtheorem{defn}{\protect\definitionname}
  \theoremstyle{remark}
  \newtheorem{notation}{\protect\notationname}
  \theoremstyle{plain}
  \newtheorem{fact}{\protect\factname}
  \theoremstyle{plain}
  \newtheorem{prop}{\protect\propositionname}
\theoremstyle{plain}
\newtheorem{thm}{\protect\theoremname}
  \theoremstyle{remark}
  \newtheorem{rem}{\protect\remarkname}
  \theoremstyle{plain}
  \newtheorem{lem}{\protect\lemmaname}
  \theoremstyle{plain}
  \newtheorem{cor}{\protect\corollaryname}
  \theoremstyle{plain}
  \newtheorem{question}{\protect\questionname}
  \newtheorem*{thm*}{\protect\theoremname}
   \newtheorem*{cor*}{\protect\corollaryname}
\def\Ind#1#2{#1\setbox0=\hbox{$#1x$}\kern\wd0\hbox to 0pt{\hss$#1\mid$\hss}
\lower.9\ht0\hbox to 0pt{\hss$#1\smile$\hss}\kern\wd0}
\def\Notind#1#2{#1\setbox0=\hbox{$#1x$}\kern\wd0\hbox to 0pt{\mathchardef
\nn="3236\hss$#1\nn$\kern1.4\wd0\hss}\hbox to
0pt{\hss$#1\mid$\hss}\lower.9\ht0
\hbox to 0pt{\hss$#1\smile$\hss}\kern\wd0}
  \providecommand{\definitionname}{Definition}
  \providecommand{\factname}{Fact}
  \providecommand{\lemmaname}{Lemma}
  \providecommand{\notationname}{Notation}
  \providecommand{\propositionname}{Proposition}
  \providecommand{\questionname}{Question}
  \providecommand{\remarkname}{Remark}
\providecommand{\corollaryname}{Corollary}
\providecommand{\theoremname}{Theorem}
\begin{document}
\global\long\def\Rr{\mathbb{R}}
 \global\long\def\dim#1{\mathrm{d}\left(#1\right)}
 \global\long\def\cl#1{\mathrm{cl}\left(#1\right)}
 \global\long\def\gcl#1{\mathrm{gcl}\left(#1\right)}
 \global\long\def\Aut{\mathrm{Aut}}
 \global\long\def\Autf{\mathrm{Autf}}
 \global\long\def\acleq{\mbox{\ensuremath{\mathrm{acl}}\ensuremath{\ensuremath{^{eq}}}}}
 \global\long\def\bdd{\mathrm{Bdd}}
 \global\long\def\FK#1#2{\left\langle {#1\atop #2}\right\rangle }
 \global\long\def\PK#1#2{\left({#1\atop #2}\right)}
\global\long\def\Age{\mathrm{Age}}
\global\long\def\cld#1#2{\mathrm{cl_{#2}}\left(#1\right)}
\global\long\def\clde#1#2#3{\mathrm{cl_{#2}}^{#3}\left(#1\right)}
\global\long\def\subf{\subseteq_{\mbox{fin}}}
 \global\long\def\ntp{\mbox{NTP}_{2}}
\global\long\def\nsop#1{\mbox{NSOP}_{#1}}
\global\long\def\sop{\mbox{SOP}}
\global\long\def\ac{\mbox{AC}}

\title{Convex Ramsey Matrices and Non-Amenability of Automorphism groups of generic structures } 
\authors{
\author{Omid Etesami}
\address{ School of Mathematics\\ Institute for Research in Fundamental Sciences (IPM)\\ P.O. Box 19395-5746\\ Tehran, Iran.}
\email{etesami@gmail.com }
\author{Zaniar Ghadernezhad}
\address{ School of Mathematics\\ Institute for Research in Fundamental Sciences (IPM)\\ P.O. Box 19395-5746\\ Tehran, Iran\\and \\Mathematisches Institut\\ Abteilung für Mathematische Logik\\ Eckerstr 1, Raum 304, D-79104\\ Freiburg im Breisgau}
\email{zaniar.ghadernezhad@math.uni-freiburg.de}
}
\thanks{ The second author would like to thank Katrin Tent and Matrin Ziegler for helpful comments and discussions on different parts of this article.}
\maketitle

\begin{abstract}
In this paper we prove that the automorphism groups of certain countable
generic structures are not amenable. We first prove the existence
of particular matrices that do not satisfy the convex Ramsey condition.
Then for a pair of elements in a smooth class, we introduce the property of forming a ``free-pseudoplane" in the generic structure. The existence
of such particular matrices and the correspondence in \cite{GKP-Amen} allow
us to prove the non-amenability of the automorphism group of a generic structure obtained from a smooth class with a pair that forms a free-pseudoplane. As an application we show that the automorphism group
of an ab-initio generic structure that is constructed using a pre-dimension function with irrational coefficients is not amenable.
\end{abstract}

\maketitle

\section{Introduction}

The study of amenable groups originated in the work of von Neumann in his analysis of the Banach-Tarski paradox. Since then, amenability, non-amenability, and paradoxicality have been studied for various groups appearing in different parts of mathematics. The definition of an amenable group can be considered for any topological group, although the original definition was first phrased only for locally compact Hausdorff groups. Let $G$ be a topological group. A $G$-flow is a continuous action of $G$ on a compact Hausdorff space. A group $G$ is amenable if every $G$-flow admits an invariant Borel probability measure. Well-known examples of amenable groups are finite groups, solvable groups, and locally compact abelian groups. 

The study of amenability of topological groups benefit from various viewpoints that range from analytic to combinatorial. The groups that we are considering in this paper are the automorphism groups of countable structures. The automorphism group of a countable first-order structure equipped with the point-wise convergence topology is a Polish group, which is also a closed subgroup of the symmetric group of its underlying set. Kechris, Pestov, and Todorcevic in \cite{KPT} established a very general correspondence which equates a stronger form of amenability, called extreme amenability, of the automorphism group of an ordered Fra\"iss\'e-limit structure with the Ramsey property of the class of its finite substructures. This is particularly interesting because in a suitable language any closed subgroup of the symmetric group of a countable set can be seen as the automorphism group of a  Fra\"iss\'e-limit structure.
 
 In the same spirit Moore in  \cite{TatchMoore2011} showed a correspondence between the automorphism groups of Fra\"iss\'e-limit structures and an another structural Ramsey property, called convex Ramsey property, which englobes F\o lner's existing treatment in the analytic approach. In this paper, we consider automorphism groups of  generic structures. A generic structure, similar to a Fra\"iss\'e-limit structure,  is constructed from a countable class of finite structures, called a smooth class, with an amalgamation property.  In \cite{GKP-Amen} following ideas
 of \cite{KPT,TatchMoore2011} it is shown that the amenability and
 extreme amenability of the automorphism group of a generic
 structure corresponds to the structural Ramsey type properties of its smooth class. 
 
It is shown in Theorem 32 in \cite{GKP-Amen} that the
automorphism group of a generic structure is amenable if and only
if the automorphism group has the convex Ramsey property with
respect to the smooth class (see Definition $\ref{def:convex ramsey}$).
Then the correspondence is used to show that if the smooth class of a generic
structure contains a certain pair of finite closed substructures, called a tree-pair (Definition 39 in
\cite{GKP-Amen}), then the automorphism group of the generic structure
is not amenable (see Theorem 40 in \cite{GKP-Amen}).  Existence of a tree-pair implies that there is an open subgroup of the automorphism group of the  generic structure that acts on a tree which is a substructure of the generic structure. 

The class of generic structures that are mainly considered are those that are obtained
from pre-dimension functions $\delta_{\alpha}$ where $\alpha\in\left(0,1\right)$
(see Subsection $\ref{subsec: pre-dim}$). They are originated
in the seminal work of Hrushovski in \cite{Hrunew} where he constructs a strongly minimal structure with a non-locally modular geometry that does not interpret an infinite group.
In Theorem 40 in \cite{GKP-Amen} it is shown that the generic structures that are
obtained from smooth classes of finite structures with pre-dimension
functions $\delta_{\alpha}$ where $\alpha$ is a rational number,
contain tree-pairs and hence the automorphism groups of their
generic structures are not amenable. However, for the generic structures
that are obtained from pre-dimension functions with irrational $\alpha$'s the statement of  Theorem 40 in \cite{GKP-Amen} do not hold. These generic structures
are of a particular interest since by \cite{MR1407480} their theory
is the zero-one law theory of graphs with the edge probability $n^{-\alpha}$
(see \cite{MR924703}).

In this paper, by exhibiting a combinatorial (or geometrical) criterion for a pair of elements in a smooth class, we show that the automorphism group of certain generic structures are not amenable. We first prove in Section \ref{sec:convex}, using probabilistic methods, the following theorem that guarantees the existence of certain matrices. 

\begin{thm*}
	For every $k \ge 1$, 
 there is an $n > k$ and 
 $n\times n$-matrix $\mathrm{X}$ whose entries are $0$ or $1$ such that $\mathrm{X}$ satisfies the $k$-configuration exhibiting
 condition but does not satisfy the convex Ramsey property.
\end{thm*}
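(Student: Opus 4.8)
The plan is to obtain the desired matrix by a probabilistic construction: I would take $\mathrm{X}$ to be a random $0/1$ matrix and show that, for a suitable choice of parameters, it simultaneously exhibits all $k$-configurations and fails the convex Ramsey property with positive probability. Concretely, I fix $n$ large (to be determined as a function of $k$) and let each entry $\mathrm{X}_{ij}$ be an independent Bernoulli variable with a fixed bias $p$. The two requirements are then verified as separate high-probability events; since each holds with probability tending to $1$ as $n\to\infty$, a union bound on the complementary events shows that for all sufficiently large $n$ there is a single realization of $\mathrm{X}$ meeting both, which is exactly the matrix the theorem asserts.

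For the $k$-configuration exhibiting condition the guiding principle is that richness is generic. Since this condition asks that the matrix \emph{contain} every one of a fixed finite family of $k$-sized patterns, and each such pattern occurs in a random $\mathrm{X}$ an expected number of times growing with $n$, a second-moment (or Janson-type) concentration estimate shows that every required configuration appears with probability approaching $1$. Hence for large $n$ the random $\mathrm{X}$ satisfies the $k$-configuration exhibiting condition almost surely. This half of the argument is routine and is not where the difficulty lies.

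The substance of the proof is showing that $\mathrm{X}$ fails the convex Ramsey property of Definition \ref{def:convex ramsey}. Here I would first reformulate that property as the feasibility of a linear program over the simplex of convex coefficients: the property holds for $\mathrm{X}$ exactly when some convex combination of the permuted copies of $\mathrm{X}$ is $\varepsilon$-balanced in the required sense. By linear programming duality, infeasibility is certified by a dual vector, so the task becomes to exhibit such a certificate for the random $\mathrm{X}$. The role of randomness is that a typical $\mathrm{X}$ concentrates around its mean while retaining entrywise fluctuation of order $1$, so that no convex averaging can smooth it out below a fixed threshold $\varepsilon_0>0$; establishing this requires a uniform lower bound on the relevant objective over the whole simplex, obtained by combining a concentration inequality (controlling any single convex combination) with a net or union-bound argument (to pass to the supremum over all combinations).

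The main obstacle I anticipate is precisely this uniformity: the simplex of convex coefficients is high-dimensional, so pointwise concentration does not by itself suffice, and a naive $\varepsilon$-net may be prohibitively large. The resolution I would pursue is to exploit the dual structure of the linear program to reduce the supremum over all convex combinations to an extremal problem over a low-complexity family of dual certificates (for instance the vertices of a polytope, or a bounded set of witnesses), after which a single concentration estimate together with a union bound over this reduced family yields the required lower bound $\varepsilon_0$. The remaining delicate point is the bookkeeping that ties the two halves together, namely calibrating the bias $p$ against $k$ and $n$ so that configuration-richness and convex-Ramsey-failure hold on overlapping high-probability events.
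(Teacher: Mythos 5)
Your proposal breaks down at its central claim: an i.i.d.\ Bernoulli random matrix does \emph{not} fail the convex Ramsey condition with high probability --- it satisfies it, for every choice of bias $p=p(n)$. Unwind the definition: $\mathrm{X}$ satisfies the convex Ramsey condition if and only if there is a probability vector $\mathrm{P}$ such that the row vector $\mathrm{P}\times\mathrm{X}$ has spread at most $\frac{1}{2}$, that is, $\max_j\left(\mathrm{P}\mathrm{X}\right)_j-\min_j\left(\mathrm{P}\mathrm{X}\right)_j\le\frac{1}{2}$, since a Dirac-weight $\mathrm{W}$ merely extracts a difference of two coordinates. Now take $\mathrm{P}$ uniform, $\mathsf{p}_i=\frac{1}{n}$. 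Each coordinate $\left(\mathrm{P}\mathrm{X}\right)_j$ is then the average of $n$ independent Bernoulli($p$) entries, so by Hoeffding's inequality $\Pr\left(\left|\left(\mathrm{P}\mathrm{X}\right)_j-p\right|>\frac{1}{5}\right)\le 2e^{-2n/25}$, and a union bound over the $n$ columns shows that with probability tending to $1$ every coordinate lies within $\frac{1}{5}$ of $p$, giving spread at most $\frac{2}{5}<\frac{1}{2}$. So convex averaging \emph{does} smooth out i.i.d.\ fluctuations --- this is exactly concentration of measure, and it is the opposite of your assertion that ``no convex averaging can smooth it out below a fixed threshold.'' No calibration of $p$ against $k$ and $n$, and no amount of LP duality or net technology, can rescue this: the event you need to be typical is atypical.

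What the theorem actually requires is planted structure, and that is the content of the paper's proof. The paper starts from the deterministic matrix $\mathrm{Y}$ whose first $\frac{n}{2}$ columns are all $1$ and whose last $\frac{n}{2}$ columns are all $0$ (a maximal violator: every convex combination of its rows has spread $1$), and then flips each entry independently with the small probability $p=n^{-1/k+\epsilon}$. This $p$ is calibrated so that, first, every $k$-pattern still appears in every $k$-tuple of columns, because $p^k n=n^{k\epsilon}\to\infty$ beats the union bound over the $\binom{n}{k}2^k$ requirements; and, second, the perturbation is too sparse to repair the violation: pairing column $j$ with column $j+\frac{n}{2}$, any $\mathrm{P}$ certifying convex Ramsey must put weight at least $\frac{1}{4}$ on the rows flipped in one of these two columns, so summing over the $\frac{n}{2}$ pairs forces $\sum_i n_i\mathsf{p}_i\ge\frac{n}{8}$, where $n_i$ is the number of flipped entries in row $i$; but a Chernoff bound together with a union bound over the $n$ rows gives $\max_i n_i\le\frac{n}{16}$ almost surely, whence $\sum_i n_i\mathsf{p}_i\le\frac{n}{16}$, a contradiction. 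Note how this sidesteps the uniformity-over-the-simplex obstacle you anticipated: the only union bound is over the $n$ rows, and the estimate $\sum_i n_i\mathsf{p}_i\le\max_i n_i$ holds simultaneously for every probability vector at no cost. The moral is that your two requirements pull in opposite directions --- configuration-richness is generic, but convex-Ramsey-failure is structural --- so the proof must interpolate between a rigid skeleton and a sparse random perturbation rather than rely on pure randomness.
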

We then  generalize Theorem 40 in \cite{GKP-Amen} and
introduce another sufficient condition for a pair of elements in a smooth class, called forming a \emph{free-pseudoplane}
(see Definition $\ref{def:imp}$) in the generic structure. The existence of such a pair implies that
the automorphism group of the generic structure is not amenable. More precisely, in Section \ref{sec:main} we prove the following theorem.
\begin{thm*}
	Suppose  $\mathcal{M}$ is the $\left(\mathcal{C},\leqslant\right)$-generic
	structure of
	a smooth class $\left(\mathcal{C},\leqslant\right)$ with AP (amalgamation property). Suppose for $n\in\mathbb{N}$ there is $A,B\in\mathcal{C}$
	such that $\left|\PK BA\right|=n$ and $\left(A;B\right)$ is a free
	$2$-pseudoplane and moreover, assume there is an $n\times m$-matrix
	$\mathrm{X}$ such that $\mathrm{X}$ satisfies the 2-configuration exhibiting
	condition but does not satisfy the convex Ramsey property. Then $\Aut\left(\mathcal{M}\right)$
	is not amenable.
\end{thm*}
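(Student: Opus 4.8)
The plan is to invoke the correspondence of Theorem 32 in \cite{GKP-Amen}, which asserts that $\Aut\left(\mathcal{M}\right)$ is amenable if and only if it has the convex Ramsey property with respect to $\left(\mathcal{C},\leqslant\right)$ (Definition \ref{def:convex ramsey}). Consequently it suffices to produce a single instance witnessing the \emph{failure} of the convex Ramsey property, and the pair $A\leqslant B$ together with the matrix $\mathrm{X}$ will be used to build exactly such a witness.

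First I would unwind the convex Ramsey property for the specific pair $A\leqslant B$. This property concerns colorings of the set $\PK BA$ of copies of $A$ sitting inside copies of $B$ throughout $\mathcal{M}$, and demands, for every $\varepsilon>0$, a convex combination of automorphisms of $\mathcal{M}$ that homogenizes such a coloring (up to $\varepsilon$) across all $\left|\PK BA\right|=n$ copies of $A$ inside some copy of $B$. Encoding a coloring by the rows and the finitely many automorphism-translates by the columns of an $n\times m$ array turns this demand into the matrix-level convex Ramsey property: a successful homogenization corresponds to a convex combination of the columns that is constant across the rows. The hypothesis that $\mathrm{X}$ fails this property therefore identifies a pattern that no convex combination can homogenize.

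Next comes the geometric input. The assumption that $\left(A;B\right)$ is a free $2$-pseudoplane means, via the amalgamation property and genericity of $\mathcal{M}$, that arbitrarily many copies of $B$ can be amalgamated over their copies of $A$ with no extra identifications, so that the incidence pattern of points (copies of $A$) and lines (copies of $B$) is realized freely inside $\mathcal{M}$. The $2$-configuration exhibiting condition on $\mathrm{X}$ is precisely the combinatorial compatibility that lets the column data of $\mathrm{X}$ be laid out along such a free pair configuration (Definition \ref{def:imp}). Combining the two, I would construct inside $\mathcal{M}$ a coloring on copies of $A$ whose restriction to the $n$ copies of $A$ in each relevant copy of $B$ reproduces the rows of $\mathrm{X}$.

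The main step, and the principal obstacle, is to show that the freeness of the $2$-pseudoplane transports the matrix-level obstruction to the group level without loss. Because the amalgamation is free, any convex combination of automorphisms of $\mathcal{M}$ acting on the realized configuration descends to a convex combination of the columns of $\mathrm{X}$; a homogenizing average at the group level would thus yield a constant convex combination of the columns of $\mathrm{X}$, contradicting the assumption that $\mathrm{X}$ does not satisfy the convex Ramsey property. Verifying this descent, namely that the $2$-configuration exhibiting condition matches exactly the amalgamation degrees of freedom of the free $2$-pseudoplane so that the embedding of $\mathrm{X}$ is rigid enough to prevent the obstruction from being averaged away, is the technical heart of the argument. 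Once the failure of the convex Ramsey property for $\Aut\left(\mathcal{M}\right)$ is established, the correspondence of \cite{GKP-Amen} gives that $\Aut\left(\mathcal{M}\right)$ is not amenable.
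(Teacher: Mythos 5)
Your high-level skeleton does match the paper's: invoke the correspondence of Theorem 32 in \cite{GKP-Amen}, exhibit a $2$-coloring of $\PK{\mathcal{M}}{A}$ whose coloring matrix on copies of $B$ is $\mathrm{X}$, and let the failure of the convex Ramsey condition for $\mathrm{X}$ become a failure of the convex $\leqslant$-Ramsey property for $\Aut\left(\mathcal{M}\right)$. However, there is a genuine gap, and it begins with a misreading of the geometric hypothesis. That $\left(A;B\right)$ forms a free $2$-pseudoplane (Definition \ref{def:imp}) is \emph{not} the assertion that ``arbitrarily many copies of $B$ can be amalgamated over their copies of $A$ with no extra identifications''; it is a constraint on the copies of $B$ that already sit inside $\mathcal{M}$: every finite $X\subseteq\mathcal{M}$ admits an enumeration $\left(\Lambda_{i}\right)$ of its $B$-lines along which one can successively delete $\mathcal{H}_{2}^{\Lambda_{i}}\left(X_{i}\right)$, each deleted line meeting the remaining ones in at most two $A$-points, until no $A$-point survives. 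It is a sparsity (tree-likeness) condition on the incidence structure of $A$-points and $B$-lines in $\mathcal{M}$, and its role is to make a coloring constructible, not to realize configurations.

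Consequently you have misplaced the difficulty. The step you call ``the technical heart'' --- the descent from convex combinations at the group level to convex combinations of rows of $\mathrm{X}$ --- requires no freeness and no rigidity: once every $B$-line of $\mathcal{M}$ carries a pattern equal to a row of $\mathrm{X}$, any $\mathsf{p}\in\FK{\mathcal{M}}{B}$ pushes forward to a probability vector on the rows, Dirac weights correspond to pairs $\mathsf{q}_{1},\mathsf{q}_{2}\in\FK{\mathsf{p}}{A}$, and this is exactly what the cited Theorem 32 already packages. What your proposal omits is the part that actually takes work: constructing a single coloring $f$ of all $A$-points of $\mathcal{M}$ such that \emph{every} copy of $B$ in $\mathcal{M}$ --- not merely ``each relevant copy'' --- realizes a row of $\mathrm{X}$. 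This universality is essential: one copy of $B$ whose pattern lies outside the rows of $\mathrm{X}$ (say a constant pattern) adds a row to the coloring matrix, and a measure concentrated on that copy then satisfies the convex Ramsey condition, destroying the obstruction. Since distinct $B$-lines share $A$-points, they cannot be colored independently; the paper colors the lines of a finite substructure $C$ greedily in reverse peeling order, using that each new line meets the already-colored part in at most two $A$-points and that the $2$-configuration exhibiting condition supplies a row of $\mathrm{X}$ agreeing with those at most two forced values, and then applies Rado's selection lemma \cite{MR0270920} to globalize the finite consistent colorings to all of $\mathcal{M}$. This peeling-plus-compactness construction is the missing core of your argument, and without it the proposal does not go through.
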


Then finally using the above theorem, in Subsection \ref{subsec: pre-dim}  we prove the following
\footnote{ David M. Evans in an email correspondence in 2015 informed the second author that, using a different method, the automorphism groups of generic structures that are obtained from pre-dimension functions	with irrational coefficients and the $\omega$-categorical generic structures are not amenable.}:

\begin{cor*}
 The automorphism group
 of generic structures that are obtained from pre-dimension functions $\delta_\alpha$  where $\alpha$ is irrational are not amenable.
\end{cor*}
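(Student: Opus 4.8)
The plan is to obtain the irrational case as an application of the theorem of Section \ref{sec:main}, feeding it the matrix produced in Section \ref{sec:convex} with the parameter $k=2$. First I would fix the framework recalled in Subsection \ref{subsec: pre-dim}: for irrational $\alpha\in(0,1)$ let $\delta_\alpha(A)=|A|-\alpha\, e(A)$ be the predimension on finite graphs ($e(A)$ the number of edges), and let $(\mathcal{C},\leqslant)$ be the associated smooth class of hereditarily nonnegative graphs with the self-sufficient (closed) substructure relation; this class has AP and yields a generic structure $\mathcal{M}$. The decisive feature of the irrational case is that every proper closed pair $A\leqslant B$ has $\delta_\alpha(B/A)>0$: one cannot have $|B\setminus A|=\alpha\cdot(\text{edges meeting }B\setminus A)$ with positive integer counts when $\alpha$ is irrational, so $\delta_\alpha(B/A)\neq 0$, and closedness forces the sign to be positive. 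In particular there are no minimal (algebraic) pairs, which is exactly why the tree-pair mechanism of Theorem 40 in \cite{GKP-Amen} is unavailable here and a genuinely new input is needed.

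Next I would build the pair. Applying the theorem of Section \ref{sec:convex} with $k=2$, fix an integer $n$ and an $n\times n$ matrix $\mathrm{X}$ with $0/1$ entries that satisfies the $2$-configuration exhibiting condition but fails the convex Ramsey property. I would then exhibit explicit finite graphs $A\leqslant B$ in $(\mathcal{C},\leqslant)$ with $\delta_\alpha(B/A)>0$, chosen so that $\left|\PK BA\right|=n$ and so that $(A;B)$ is a free $2$-pseudoplane in the sense of Definition \ref{def:imp}. Concretely I would take $A$ small, so that its copies inside $B$ can be enumerated and their number tuned to exactly $n$, and then attach to $A$ a configuration whose predimension increment is strictly positive; strict positivity is automatic here by irrationality, and it is precisely what guarantees that a free amalgam of several copies of $B$ over a shared copy of $A$ stays in $\mathcal{C}$ and keeps that copy closed. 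Iterating such free amalgamations along the copies of $A$ produces the branching, cycle-free incidence structure demanded by Definition \ref{def:imp}, which then embeds self-sufficiently into $\mathcal{M}$ by genericity.

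Finally I would invoke the theorem of Section \ref{sec:main}: with $(A;B)$ a free $2$-pseudoplane, $\left|\PK BA\right|=n$, and $\mathrm{X}$ as above of the matching size $n\times n$ (an $n\times m$ matrix with $m=n$) satisfying the $2$-configuration exhibiting condition but not the convex Ramsey property, all hypotheses are met and the conclusion is that $\Aut(\mathcal{M})$ is not amenable.

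I expect the heart of the argument, and the main obstacle, to be the middle step: producing a single closed pair $A\leqslant B$ that simultaneously has strictly positive $\delta_\alpha(B/A)$, has exactly the number of copies of $A$ dictated by Section \ref{sec:convex}, and verifies every clause of the free $2$-pseudoplane definition, in particular that all finite free amalgams over copies of $A$ remain self-sufficient, so that the entire pseudoplane embeds as a closed substructure of $\mathcal{M}$. Here irrationality cuts both ways: it kills the algebraic/tree-pair route, but the strict positivity of every proper predimension increment that it forces is exactly what makes the required amalgamations genuinely free and keeps the construction inside $(\mathcal{C},\leqslant)$.
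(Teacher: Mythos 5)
Your outer scaffolding matches the paper's exactly: Theorem \ref{thm:mat} with $k=2$ supplies the matrix, a pair $(A;B)$ with $\left|\PK BA\right|=n$ forming a free $2$-pseudoplane supplies the other hypothesis, and Theorem \ref{thm:2-pseudo} finishes. The genuine gap is in the middle step, the one you yourself flag as the heart of the argument: the mechanism you propose for it is backwards. You want to attach to $A$ configurations of \emph{strictly positive} predimension increment and generate the pseudoplane by iterated free amalgamation over copies of $A$. That cannot work. If inside $B$ there were two distinct closed copies $A_{1},A_{2}$ of $A$ whose union $A_{1}A_{2}$ is closed in $B$ --- which is precisely the kind of independence that free-amalgamation-style constructions produce --- then by the free-amalgamation property of $\mathbf{K}_{\alpha}$ the amalgam $B\otimes_{A_{1}A_{2}}B'$ lies in $\mathbf{K}_{\alpha}$ with both copies of $B$ closed in it, and genericity plants this configuration closedly inside $\mathbf{M}^{\alpha}$; that gives two distinct $B$-lines sharing two $A$-points, contradicting the $2$-pseudoplane condition (Definition \ref{def:imp}, item (4), with $k=2$). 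What the construction must arrange is the opposite of independence: inside $B$, the closure of any two closed copies of $A$ must be all of $B$ (Condition (2) in the proof of Lemma \ref{lem:main}). The paper achieves this gluing by attaching, over each union $\bigcup_{j\in i}A_{u_{j}}$, a graph $X_{u}$ with slightly \emph{negative} relative predimension $\delta_{\alpha}\bigl(X_{u}/\bigcup_{j\in i}A_{u_{j}}\bigr)=-\epsilon_{u}<0$. The role of irrationality is not that ``all closed increments are positive''; it is Fact \ref{fact:cofinality} together with Lemma \ref{lem:small}, which let one make $r-\alpha s$ negative but arbitrarily close to $0$, so the $\epsilon_{u}$ can be tuned small enough that $A\leqslant B$ survives, $\delta_{\alpha}\left(B\right)\geq\delta_{\alpha}\left(A\right)$, and $\delta_{\alpha}\left(B/A\right)<\frac{1}{2}\delta_{\alpha}\left(A\right)$. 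For rational $\alpha=p/q$ negative increments are bounded away from $0$ by $\frac{1}{q}$, which is exactly why that case needs the different (tree-pair, zero-increment) mechanism and why this fine-tuning is the genuinely new input here.

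There is a second, related gap: you propose to verify freeness by building one branching, cycle-free incidence structure and embedding it self-sufficiently into $\mathcal{M}$. But Definition \ref{def:imp}, item (6), makes ``free $2$-pseudoplane'' a \emph{universal} statement --- the removal process must succeed in every finite substructure of $\mathcal{M}$ --- and genericity works against you, since every configuration permitted by the class occurs closedly in $\mathcal{M}$; exhibiting one nice substructure establishes nothing. The paper discharges this universal obligation with the first (unnumbered) lemma of Subsection \ref{subsec: pre-dim}: if $(A;B)$ is a $2$-pseudoplane and $\delta_{\alpha}\left(B/A\right)<\frac{1}{2}\delta_{\alpha}\left(A\right)$, then it is automatically a free $2$-pseudoplane, by a predimension count on an arbitrary finite substructure. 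Your proposal contains no substitute for this lemma, and with only positive-increment attachments the inequality it needs is out of reach: keeping $n$ disjoint closed copies of $A$ inside $B$ while forcing $\delta_{\alpha}\left(B\right)<\frac{3}{2}\delta_{\alpha}\left(A\right)$ requires substantial negative contributions somewhere in $B$, which is precisely what your positivity assumption forbids.
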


\subsection{Setting}

As we have mentioned in the introduction, there is a natural topology on the automorphism group of a countable
$\mathfrak{L}$-structure $M$; that is the pointwise convergence
topology. This topology turns $\Aut\left(M\right)$ into a topological
group and indeed a Polish group (see \cite{MR1066691}). Our focus in this paper is the automorphism groups of generic structures, a specific kind of
countable structures that are constructed from a class a finite structures with an amalgamation property. 

\subsubsection{Generic structures}
\begin{defn}
\label{def:smooth} Let $\mathcal{\mathfrak{L}}$ be a countable relational
language and $\mathcal{C}$ be a class of $\mathfrak{L}$-structures
which is closed under isomorphism and substructure. Assume $\emptyset\in \mathcal C$. Let $\leqslant$
be a reflexive and transitive relation on elements $A,B\in\mathcal{C}$
where $A\subseteq B$ and moreover, invariant under $\mathfrak{L}$-embeddings
that has the following property
\begin{itemize}
\item If $A,A_{1},A_{2}\in\mathcal{C}$ and $A_{1},A_{2}\subseteq A$, then
$A_{1}\leqslant A$ implies $A_{1}\cap A_{2}\leqslant A_{2}$.
\end{itemize}
The class $\mathcal{C}$ together with the relation $\leqslant$ is
called a \emph{smooth class}. For $A,B\in\mathcal{\mathcal{C}}$ if
$A\leqslant B$, then we say $A$ is\emph{ $\leqslant$-closed substructure}
of $B$, or simply $A$ is a \emph{ $\leqslant$-closed }in $B$. Moreover,
if $\mathcal{N}$ is an infinite $\mathfrak{L}$-structure such that
$A\subseteq N$, we write $A\leqslant\mathcal{N}$ whenever $A\leqslant B$
for every finite substructure $B$ of $N$ that contains $A$. We
say an embedding $\Gamma$ of $A$ into $\mathcal{N}$ is \emph{$\leqslant$-embedding}
if $\Gamma\left[A\right]\leqslant\mathcal{N}$. 
\end{defn}
\begin{notation}
Suppose $A,B,C$ are $\mathfrak{L}$-structures with $A,B\subseteq C$.
We write $AB$ for the $\mathfrak{L}$-substructure of $C$ with domain
$A\cup B$. For an $\mathfrak{L}$-structure $\mathcal{N}$, write
$\mbox{Age}\left(\mathcal{N}\right)$ for the set of all finite substructures
of $\mathcal{N}$, up to isomorphism. Write $\overline{\mathcal{C}}$
for the class of all $\mathfrak{L}$-structures $\mathcal{N}$ such
that $\Age\left(\mathcal{N}\right)\subseteq\mathcal{\mathcal{C}}$.
Suppose $A\in\mathcal{C}$ and let $N\in\bar{\mathcal{C}}$. Write
$\PK NA$ for the set of all $\leqslant$-embeddings of $A$ into
$N$ and write $\FK NA$ for the set of all finitely supported probability
measures on $\PK NA$. Suppose $X\leqslant Y\leqslant Z$ and let
$\mbox{\ensuremath{\mathsf{p}}}\in\FK ZY$. By $\FK{\mathsf{p}}X$
we mean the set 
\[
\left\{ \mathsf{q}\in\FK ZX:\exists\mathsf{r}\in\FK YX\forall\Gamma\in\PK YX\forall\Lambda\in\PK ZY,\mathsf{q}\left(\Lambda\circ\Gamma\right)=\mathsf{p}\left(\Lambda\right)\cdot\mathsf{r}\left(\Gamma\right)\right\} .
\]
\end{notation}
\begin{defn}
Let $\left(\mathcal{C},\leqslant\right)$ be a smooth class.
\begin{enumerate}
\item We say $\left(\mathcal{C},\leqslant\right)$ has the \emph{joint-embedding
property} (JEP) if for every two elements  $A,B\in\mathcal{C}$ there is $C\in\mathcal{C}$
such that $A,B\leqslant C$.
\item Suppose $A,B$ and $C$ are elements of $\mathcal{C}$ such that $A\leqslant B,C$.
The \emph{free-amalgam} of $B$ and $C$ over $A$ is a structure
consisting of the disjoint union of
$B$ and $C$ over $A$ whose only $\mathfrak{L}$-relations are those from
$B$ and $C$, denoted by $B\otimes_{A}C$.
\item We say $\left(\mathcal{C},\leqslant\right)$ has the \emph{$\leqslant$-amalgamation
property} (AP) if for every $A,B$ and $C$ elements of $\mathcal{C}$
with $A\leqslant B,C$, there is $D\in\mathcal{C}$ such that $B\leqslant D$
and $C\leqslant D$.
\item We say $\left(\mathcal{C},\leqslant\right)$ has the \emph{free-amalgamation
property} if for every $A,B$ and $C$ elements of $\mathcal{C}$
with $A\leqslant B,C$, then $B\otimes_{A}C\in\mathcal{C}$ and $B,C\leqslant B\otimes_{A}C$.
\end{enumerate}
\end{defn}
\begin{fact}
\label{f5} (See \cite{Wag1}) Let $\left(\mathcal{C},\leqslant\right)$
be a smooth class and suppose $A\in\mathcal{C}$ and $\mathcal{N}\in\overline{\mathcal{C}}$
are $\mathfrak{L}$-structures such that $A\subseteq\mathcal{N}$.
Then, there is a unique smallest $\leqslant$-closed set that contains
$A$ in $\mathcal{N}$. It is called $\leqslant$-closure of $A$
in $\mathcal{N}$, denoted by $\cld A{\mathcal{N}}$. 
\end{fact}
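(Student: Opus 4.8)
The plan is to first make precise what it should mean for a possibly infinite subset of $N$ to be $\leqslant$-closed, then to show that the collection of such closed sets is stable under arbitrary intersections; the desired closure is then simply the intersection of all $\leqslant$-closed supersets of $A$. First I would extend $\leqslant$ to arbitrary subsets: for $X\subseteq N$ declare $X\leqslant\mathcal{N}$ iff $X\cap B\leqslant B$ for every finite substructure $B$ of $N$. This is meaningful because $\mathcal{N}\in\overline{\mathcal{C}}$ and $\mathcal{C}$ is closed under substructure, so both $X\cap B$ and $B$ lie in $\mathcal{C}$. I would then check that this agrees with Definition \ref{def:smooth} on a finite $A$: if $A\leqslant\mathcal{N}$ in the original sense and $B$ is any finite substructure, then applying the smooth-class axiom inside the finite structure $A\cup B$ (using $A\leqslant A\cup B$, with $A$ and $B$ as the two subsets) yields $A\cap B\leqslant B$; conversely, taking $B\supseteq A$ recovers the original condition $A\leqslant B$.

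The key step is the intersection lemma. Working first inside a single finite $B\in\mathcal{C}$: if $C_{1},C_{2}\subseteq B$ with $C_{1}\leqslant B$ and $C_{2}\leqslant B$, then the axiom of Definition \ref{def:smooth} with ambient structure $B$ gives $C_{1}\cap C_{2}\leqslant C_{2}$, and transitivity of $\leqslant$ along $C_{1}\cap C_{2}\subseteq C_{2}\subseteq B$ gives $C_{1}\cap C_{2}\leqslant B$; hence the $\leqslant$-closed subsets of $B$ are stable under finite intersection. Now let $(X_{i})_{i\in I}$ be any family with $X_{i}\leqslant\mathcal{N}$ and fix a finite substructure $B$ of $N$. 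Since $B$ is finite there are only finitely many distinct sets among the $X_{i}\cap B$, so $\bigl(\bigcap_{i}X_{i}\bigr)\cap B=\bigcap_{i}(X_{i}\cap B)$ is a finite intersection of $\leqslant$-closed subsets of $B$, hence $\leqslant$-closed in $B$; as $B$ was arbitrary, $\bigcap_{i}X_{i}\leqslant\mathcal{N}$. This is the crucial reduction: passing to a finite witness $B$ turns an arbitrary intersection into a finite one, which is exactly where finiteness of the members of $\mathcal{C}$ is used.

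Finally I would set $\cld A{\mathcal{N}}:=\bigcap\{X:A\subseteq X\subseteq N,\ X\leqslant\mathcal{N}\}$. This family is nonempty because $N$ itself is $\leqslant$-closed (for finite $B\subseteq N$ we have $N\cap B=B\leqslant B$ by reflexivity), so by the intersection lemma $\cld A{\mathcal{N}}$ is $\leqslant$-closed, contains $A$, and is contained in every $\leqslant$-closed superset of $A$; this yields both existence and, being a least element, uniqueness. I expect the main obstacle to be purely the bookkeeping in the intersection lemma---applying the smooth-class axiom in the correct ambient finite structure and verifying that the extended notion of $\leqslant$-closed is the right one---rather than anything requiring amalgamation or a dimension function. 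In particular no finiteness of $\cld A{\mathcal{N}}$ is asserted in the statement, and none is needed in this argument; finiteness, when it holds, comes separately from a pre-dimension function bounded below.
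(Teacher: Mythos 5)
Your proof is correct. The paper gives no argument of its own for this fact---it is quoted from \cite{Wag1}---and your argument is the standard one for it: extend $\leqslant$-closedness to arbitrary subsets of $N$ via finite traces (checking agreement with the paper's definition on finite sets), reduce arbitrary intersections to finite intersections inside a finite $B$ where the smooth-class axiom plus transitivity apply, and take the intersection of all $\leqslant$-closed supersets of $A$.
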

\begin{prop}
\label{prop:sgen} Suppose $\left(\mathcal{C},\leqslant\right)$ is
a smooth class with AP. Then there is a unique countable structure
$\mathcal{M}$, up to isomorphism, satisfying: 
\begin{enumerate}
\item $\Age\left(\mathcal{M}\right)=\mathcal{C}$; 
\item $\mathcal{M}=\bigcup_{i\in\omega}A_{i}$ where $\left(A_{i}:i\in\omega\right)$
is a chain of $\leqslant$-closed finite sets;
\item If $A\leqslant\mathcal{M}$ and $A\leqslant B\in\mathcal{C}$, then
there is an embedding $\Lambda:B\longrightarrow\mathcal{M}$ with
$\Lambda\upharpoonright_{A}=\mbox{id}_{A}$ and $\Lambda\left[B\right]\leqslant\mathcal{M}$. 
\end{enumerate}
\end{prop}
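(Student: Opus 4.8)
The plan is to obtain $\mathcal{M}$ by a Fra\"iss\'e-style construction of an increasing chain of finite $\leqslant$-closed structures whose union realizes every admissible finite extension, and then to establish uniqueness by a back-and-forth argument driven by the extension property (3). For existence, I would first observe that, since $\mathfrak{L}$ is countable and relational and the members of $\mathcal{C}$ are finite, there are only countably many isomorphism types in $\mathcal{C}$ and only countably many pairs $A\leqslant B$ up to isomorphism. I would fix a bookkeeping enumeration of all tasks of the form ``a finite $\leqslant$-closed substructure $A$ of the stage built so far, together with an extension $A\leqslant B$ in $\mathcal{C}$,'' arranged so that every such task is treated cofinally often. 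Starting from $A_{0}=\emptyset\in\mathcal{C}$, at stage $i$ I would handle a task $(A,B)$ with $A\leqslant A_{i}$ and $A\leqslant B$ by applying AP to $A\leqslant A_{i}$ and $A\leqslant B$, obtaining $D\in\mathcal{C}$ with $A_{i}\leqslant D$ and $B\leqslant D$, and setting $A_{i+1}=D$.

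By construction $\left(A_{i}:i\in\omega\right)$ is a $\leqslant$-chain, which gives (2), and I set $\mathcal{M}:=\bigcup_{i}A_{i}$. Every finite substructure of $\mathcal{M}$ lies in some $A_{i}\in\mathcal{C}$, and $\mathcal{C}$ is closed under substructure, so $\Age\left(\mathcal{M}\right)\subseteq\mathcal{C}$; conversely, using that $\emptyset\leqslant A$ for every $A\in\mathcal{C}$ (hence $\emptyset\leqslant\mathcal{M}$, the standard smooth-class convention underlying Fact \ref{f5}), treating the task $(\emptyset,C)$ shows every $C\in\mathcal{C}$ embeds, giving (1). For (3), if $A\leqslant\mathcal{M}$ is finite and $A\leqslant B\in\mathcal{C}$, I would pick $A_{i}\supseteq A$, note $A\leqslant A_{i}$ by definition of $A\leqslant\mathcal{M}$, and use that the task $(A,B)$ is treated at some stage $j\geqslant i$; the amalgamation step then produces $A_{j+1}$ with an embedding $\Lambda:B\to A_{j+1}$ fixing $A$ and $\Lambda[B]\leqslant A_{j+1}\leqslant\mathcal{M}$, so transitivity of $\leqslant$ yields $\Lambda[B]\leqslant\mathcal{M}$.

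For uniqueness, suppose $\mathcal{M},\mathcal{M}'$ both satisfy (1)--(3). I would build an isomorphism by back-and-forth over partial isomorphisms $f:A\to A'$ between finite $\leqslant$-closed substructures $A\leqslant\mathcal{M}$ and $A'\leqslant\mathcal{M}'$. The crucial point is that property (2) forces finite closures to be finite: any finite $X\subseteq\mathcal{M}$ lies in some $A_{i}$, so $\cld{X}{\mathcal{M}}\subseteq A_{i}$ is finite, and symmetrically in $\mathcal{M}'$. For the forth step, given $f$ and $a\in\mathcal{M}$, I set $B=\cld{Aa}{\mathcal{M}}$, which is finite with $A\leqslant B\leqslant\mathcal{M}$; I transport the $\mathfrak{L}$-structure of $B$ across $f$ to a structure $B''$ over $A'$ with $A'\leqslant B''\in\mathcal{C}$, and apply (3) in $\mathcal{M}'$ to embed $B''$ over $A'$ as a $\leqslant$-closed substructure of $\mathcal{M}'$, extending $f$ to a partial isomorphism whose domain contains $a$. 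The back step is symmetric, and enumerating $\mathcal{M}$ and $\mathcal{M}'$ while alternating yields $\mathcal{M}\cong\mathcal{M}'$.

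The routine parts are the amalgamation bookkeeping and the verifications of (1)--(2). The main obstacle I anticipate is bookkeeping discipline: I must enumerate not merely the isomorphism types of extensions $A\leqslant B$, but also the finitely many $\leqslant$-embeddings of each such $A$ into the finite stage $A_{i}$, and ensure every (embedding, extension) pair is eventually treated, so that (3) holds for \emph{all} $A\leqslant\mathcal{M}$ rather than a single copy. Checking that the transported extension $B''$ genuinely lies in $\mathcal{C}$ with $A'\leqslant B''$ relies on $\leqslant$ being invariant under $\mathfrak{L}$-embeddings, and the fact that $\cld{Aa}{\mathcal{M}}$ satisfies $A\leqslant\cld{Aa}{\mathcal{M}}$ uses the definition of $A\leqslant\mathcal{M}$ together with the $A_{1}\cap A_{2}\leqslant A_{2}$ axiom underlying Fact \ref{f5}; these are the points where the smooth-class axioms must be invoked with care.
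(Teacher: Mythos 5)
Your proof is correct and is essentially the same argument as the one the paper relies on: the paper gives no proof of its own, simply citing \cite{Wag1}, where the generic structure is built by exactly this task-enumeration amalgamation construction followed by a back-and-forth driven by the extension property (3). The convention you flag, that $\emptyset\leqslant A$ for every $A\in\mathcal{C}$ (so that AP yields JEP, condition (1) is attainable, and the back-and-forth can start from the empty map), is indeed needed and is implicit in the paper's setting.
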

\begin{proof}
See \cite{Wag1}. 
\end{proof}
\begin{defn}
The structure $\mathcal{M}$, that is obtained in the above proposition,
is called the \emph{Fra\"iss\'e-Hrushovski }$\left(\mathcal{C},\leqslant\right)$\emph{-generic
}structure or simply the $\left(\mathcal{C},\leqslant\right)$\emph{-generic
}structure\emph{.}
\end{defn}
In \cite{GKP-Amen} it is
shown that the \emph{amenability} and \emph{extreme amenability} of
the automorphism group of a generic structure corresponds to structural Ramsey properties of its smooth class.
\begin{defn}
\label{def:convex ramsey} Suppose $\mathcal{M}$ is the $\left(\mathcal{C},\leqslant\right)$-generic
structure of a smooth class $\left(\mathcal{C},\leqslant\right)$.
We say $\Aut\left(\mathcal{M}\right)$ has the \emph{convex $\leqslant$-Ramsey
property with respect to} $\left(\mathcal{C},\leqslant\right)$ if
for every $A,B\in\mathcal{C}$ with $A\leqslant B$ and every $2$-coloring
function $f:\PK{\mathcal{M}}A\rightarrow\left\{ 0,1\right\} $, there
is $\mathsf{p}\in\FK{\mathcal{M}}B$ such that $\left|f\left(\mathsf{p}_{1}\right)-f\left(\mathsf{p}_{2}\right)\right|\leq\frac{1}{2}$
for every $\mathsf{q}_{1},\mathsf{q}_{2}\in\FK{\mathsf{p}}A$.
\end{defn}
The following correspondence has been proved in \cite{GKP-Amen}.
\begin{thm}
Suppose $\mathcal{M}$ is the $\left(\mathcal{C},\leqslant\right)$-generic
structure of a smooth class $\left(\mathcal{C},\leqslant\right)$.
Then $\Aut\left(\mathcal{M}\right)$ has the convex $\leqslant$-Ramsey
property\emph{ }with respect to $\left(\mathcal{C},\leqslant\right)$
if and only if $\Aut\left(\mathcal{M}\right)$ is amenable.
\end{thm}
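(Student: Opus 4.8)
The plan is to prove both implications by translating the convex Ramsey condition of Definition \ref{def:convex ramsey} into the existence of invariant measures on $\Aut(\mathcal{M})$-flows, in the spirit of \cite{KPT,TatchMoore2011}. Write $G=\Aut(\mathcal{M})$. The first step is to set up a dictionary between the combinatorics of $\left(\mathcal{C},\leqslant\right)$ and the dynamics of $G$: by genericity (Proposition \ref{prop:sgen}) a back-and-forth argument shows that $G$ acts transitively on $\PK{\mathcal{M}}A$ for every $A\leqslant\mathcal{M}$, so that $\PK{\mathcal{M}}A$ is identified $G$-equivariantly with the coset space $G/G_{A}$, where $G_{A}$ is the open pointwise stabilizer of a fixed copy $\Gamma_{0}$ of $A$. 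Under this identification a coloring $f\colon\PK{\mathcal{M}}A\to\{0,1\}$ is a right-$G_{A}$-invariant bounded function on $G$, an element of $\FK{\mathcal{M}}A$ is a finitely supported probability measure on $G/G_{A}$, and extending $f$ to measures by $f(\mathsf q)=\sum_{\Gamma}\mathsf q(\Gamma)f(\Gamma)$ turns Definition \ref{def:convex ramsey} into an assertion about how flat the affine $G$-action on measures can be made by finitely supported convex combinations.

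For the direction \emph{convex Ramsey $\Rightarrow$ amenable} I would let $X$ be an arbitrary $G$-flow and seek a fixed point of the induced affine action on the weak$^{*}$-compact convex set $P(X)$ of probability measures. The engine is the following reformulation: given $A\leqslant B$ and $\mathsf p\in\FK{\mathcal{M}}B$, write each copy $\Lambda\in\PK{\mathcal{M}}B$ as $\Lambda=g_{\Lambda}\cdot\Lambda_{0}$ for a fixed $\Lambda_{0}$ and put $\nu=\sum_{\Lambda}\mathsf p(\Lambda)\,\delta_{g_{\Lambda}^{-1}}$; testing Definition \ref{def:convex ramsey} against the point masses $\mathsf r=\delta_{\Gamma}$ shows that $\Gamma\mapsto(\nu * f)(\Lambda_{0}\circ\Gamma)$ has oscillation at most $\frac{1}{2}$ over the copies of $A$ inside $\Lambda_{0}$, where $\nu * f=\sum_{g}\nu(g)(g\cdot f)$. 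Thus convex Ramsey produces finitely supported averages of $G$-translates that are nearly constant on growing finite pieces of $\PK{\mathcal{M}}A$, a Reiter/F\o lner-type near-invariance. Approximating a given $\phi\in C(X)$ by a function factoring through some $\PK{\mathcal{M}}A$, choosing $B\geqslant A$, and pushing a base point forward by the resulting $\nu$ yields increasingly invariant measures whose weak$^{*}$ cluster point is $G$-invariant. To drive the threshold from $\frac{1}{2}$ to an arbitrary $\varepsilon$ I would first prove an amplification lemma: by iterating the property over products, the convex Ramsey property with threshold $\frac{1}{2}$ entails its analogue with threshold $\varepsilon$ for every $\varepsilon\in(0,1)$.

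For the converse \emph{amenable $\Rightarrow$ convex Ramsey} I would fix $A\leqslant B$ and a coloring $f$, regard $f$ as a point of the compact space $\{0,1\}^{\PK{\mathcal{M}}A}$ on which $G$ acts continuously, and form the flow $Y=\overline{G\cdot f}$. Amenability provides a $G$-invariant Borel probability measure $\mu$ on $Y$; averaging the coordinate evaluations against $\mu$ gives a $G$-invariant function on $\PK{\mathcal{M}}A$, which by transitivity is a constant $c$. Realizing this barycenter as a weak$^{*}$ limit of finitely supported convex combinations of $G$-translates of $f$, I would approximate $\mu$ by such a combination and reorganize it---using the extension property of Proposition \ref{prop:sgen} to realize the relevant configurations as copies of $B$ inside $\mathcal{M}$---into a measure $\mathsf p\in\FK{\mathcal{M}}B$ for which every $\mathsf q\in\FK{\mathsf p}A$ has $f(\mathsf q)$ close to $c$, whence $\left|f(\mathsf q_{1})-f(\mathsf q_{2})\right|\leq\frac{1}{2}$.

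The main obstacle in both directions is the same bookkeeping: matching the product structure built into $\FK{\mathsf p}A$---a copy of $B$ selected by $\mathsf p$, then a copy of $A$ inside $B$ selected by a single auxiliary $\mathsf r\in\FK BA$---with left-translation averaging on $G$, and controlling the passage between finitely supported convex combinations and abstract invariant measures through weak$^{*}$ approximation while respecting $\leqslant$-closedness and the exhaustion of $\mathcal{M}$ by its finite $\leqslant$-closed substructures. The amplification from threshold $\frac{1}{2}$ to arbitrary $\varepsilon$ is a second delicate point that I would isolate as a preliminary lemma.
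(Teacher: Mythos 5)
First, a point of reference: the paper itself does not prove this theorem; it imports it verbatim from \cite{GKP-Amen} (Theorem 32 there), so your proposal can only be measured against the Moore-style correspondence that reference adapts. Against that benchmark, your dictionary (identifying $\PK{\mathcal{M}}A$ with $G/G_{A}$ via $\leqslant$-homogeneity of the generic, and colorings with right-$G_{A}$-invariant functions) and your whole treatment of the direction ``amenable implies convex Ramsey'' are essentially correct and standard: the orbit closure $Y=\overline{G\cdot f}\subseteq\left\{ 0,1\right\} ^{\PK{\mathcal{M}}A}$ is a $G$-flow, an invariant measure on it has constant barycenter by transitivity of $G$ on $\PK{\mathcal{M}}A$, every measure on $Y$ is a weak$^{*}$ limit of convex combinations of point masses at points of the dense orbit (Krein--Milman), and transporting coefficients from group elements $g_{i}$ to $\leqslant$-embeddings $g_{i}\circ\Lambda_{0}\in\PK{\mathcal{M}}B$ yields the required $\mathsf{p}\in\FK{\mathcal{M}}B$. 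Notably, this easy direction is the only one the present paper ever uses: non-amenability is always deduced from the failure of the convex Ramsey condition.

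The genuine gap is in the direction ``convex Ramsey implies amenable'', exactly at the point you defer to a preliminary amplification lemma. Definition \ref{def:convex ramsey} fixes the threshold $\frac{1}{2}$ and quantifies over one $\left\{ 0,1\right\} $-valued coloring at a time, whereas your weak$^{*}$ cluster-point (Reiter) argument needs, for every $\varepsilon>0$ and every \emph{finite family} of test functions, a \emph{single} finitely supported measure that is $\varepsilon$-almost invariant for all of them simultaneously; without this finite-intersection property no cluster point of your net is invariant. Your proposed fix, ``iterating the property over products'', does not go through as stated. If $\mathsf{p}$ is the measure produced for $f$ and the pair $A\leqslant B$, the averaged coloring $f^{\mathsf{p}}\left(\Gamma\right)=\sum_{\Lambda}\mathsf{p}\left(\Lambda\right)f\left(\Lambda\circ\Gamma\right)$ is a $\left[0,1\right]$-valued function on the \emph{finite} set $\PK BA$, not a $\left\{ 0,1\right\} $-valued coloring of $\PK{\mathcal{M}}A$, so the hypothesis cannot be re-applied to it; and in the group picture the average $\nu*f$ has oscillation at most $\frac{1}{2}$ only over the finitely many translates actually tested while retaining full range on $G$, so rescaling and re-applying the hypothesis returns the same bound $\frac{1}{2}$ rather than $\frac{1}{4}$. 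Breaking this circle requires ingredients your sketch does not contain: a finitary form of the property for colorings of finite $\leqslant$-closed $C\leqslant\mathcal{M}$ (obtained by a compactness or selection argument), its extension from $\left\{ 0,1\right\} $-valued to $\left[0,1\right]$-valued colorings, and only then composition of measures along a tower $B\leqslant C_{1}\leqslant C_{2}\leqslant\cdots$, using the identity $f^{\mathsf{p}\circ\mathsf{r}}=\left(f^{\mathsf{p}}\right)^{\mathsf{r}}$, which is what actually multiplies oscillation bounds and handles several colorings at once. As written, both the passage from $\frac{1}{2}$ to arbitrary $\varepsilon$ and the simultaneity needed for the invariance of the limit are unjustified, so the hard half of the equivalence is not established by your proposal.
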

Later in \cite{GKP-Amen} it has been shown that the convex $\leqslant$-Ramsey
property\emph{ }with respect to $\left(\mathcal{C},\leqslant\right)$
for $\Aut\left(\mathcal{M}\right)$ can be reformulated in the following manner: Namely the coloring matrices of all $2$-coloring functions of $\mathcal{M}$
have the convex Ramsey property (see Section 3 in \cite{GKP-Amen}) and
here we work with this latter formulation.

\subsubsection{Convex Ramsey matrices }

Before stating the definition of the convex Ramsey property for a
matrix, we need the following preliminary definitions.
\begin{defn}
\begin{enumerate}
\item An $\left(m\times1\right)$-matrix $\mathrm{W}$ is called a\emph{
Dirac-weight} \emph{matrix} if there is exactly one entry in $\mathrm{W}$
with value $1$ and exactly one entry with value $-1$ while all the
other entries of $\mathrm{W}$ are $0$.
\item A $1\times t$-matrix $\mathrm{P}=\begin{pmatrix}p_{1} &  & \cdots &  & p_{t}\end{pmatrix}_{1\times t}$
is called a \emph{probability matrix} if $p_{i}\geq0$ for every $1\leq i\leq t$,
and $\sum_{1\leq i\leq t}p_{i}=1$.
\end{enumerate}
\end{defn}
Here is the definition of a convex Ramsey matrix.
\begin{defn}
(Definition 27. in \cite{GKP-Amen}) Let $\mathrm{Y}=\begin{bmatrix}\mathrm{\bar{y}}_{1}\\
\\
\vdots\\
\\
\mathrm{\bar{y}}_{n}
\end{bmatrix}$ be an $\left(n\times m\right)$-matrix whose entries are $0$ or
$1$. Moreover, assume that no two rows of $\mathrm{Y}$ are the same. 
We say $\mathrm{Y}$ satisfies\emph{ }the\emph{ convex Ramsey condition}
if there is a $1\times t$-probability matrix $\mathrm{P}$ such that
for every $m\times1$ Dirac-weight matrix $\mathrm{W}$ we have
\[
\mathrm{P}\times\mathrm{Y}\times\mathrm{W}\leq\frac{1}{2}.
\]
\end{defn}
It is easy to give examples of matrices that satisfy the convex Ramsey
condition; for example a matrix with a constant row of $0$ or $1$
satisfies the convex Ramsey condition. A matrix with one column of
constant $0$ and one column of constant $1$ does not satisfy the
convex Ramsey condition. In order to show that the automorphism group
of a generic structure is amenable we need to show that every 2-coloring
matrix satisfies the convex Ramsey property. Conversely, if we show
there is a coloring matrix of the generic structure which does not
satisfy the convex Ramsey condition then the automorphism group is
not amenable.

\section{Convex Ramsey and $k$-configuration exhibiting matrices }
\label{sec:convex}
For a matrix to be a coloring matrix of a generic structure
some extra conditions are needed to be satisfied. The following condition
is inspired by Lemma 28 in \cite{GKP-Amen}.
\begin{defn}
	\label{def-conf}
Suppose $k\geq1$ is an integer. An $n\times m$-matrix $\mathrm{C}=\begin{bmatrix}\mathrm{\bar{c}}_{1}\\
\\
\vdots\\
\\
\mathrm{\bar{c}}_{n}
\end{bmatrix}$ whose entries are $0$ or $1$ and $m\geq k$. Then we say $\mathrm{C}$
satisfies\emph{ }the\emph{ $k$-configuration exhibition condition}
if for every $\sigma:\left\{ 1,\ldots,k\right\} \rightarrow\left\{ 0,1\right\} $
and every $l_{1}\cdots,l_{k}$ where $1\leq l_{1}<\cdots<l_{k}\leq m$,
there is $1\leq j\leq n$ such that $c_{jl_{i}}=\sigma\left(i\right)$
for $1\leq i\leq k$. 
\end{defn}
 With this new terminology, 
 that the following matrix satisfies the $1$-configuration exhibition
 condition but does not satisfy the convex Ramsey condition
 \[
 \begin{pmatrix}1 & 1 & 1 & 1 & 0 & 0\\
 1 & 1 & 1 & 0 & 1 & 0\\
 1 & 1 & 1 & 0 & 0 & 1\\
 0 & 1 & 1 & 0 & 0 & 0\\
 1 & 0 & 1 & 0 & 0 & 0\\
 1 & 1 & 0 & 0 & 0 & 0
 \end{pmatrix}_{6\times6}.
 \]
 The matrix above was introduced in \cite{GKP-Amen} which enabled the authors to conclude that the automorphism groups of ab-initio Hrushovski generic structures that are obtained from
 pre-dimension functions with rational coefficients are not amenable
 (Theorem 40 in \cite{GKP-Amen}). 
 
Definition \ref{def-conf} helps us to extend the negative result about
the amenability of automorphism groups of generic structures to those
that are obtained from pre-dimensions with irrational coefficients. 

Here, we prove the following
\begin{thm}
\label{thm:mat} Suppose $k\geq1$ is an integer. Then there is an
$n\times n$-matrix $\mathrm{X}$ whose entries are $0$ or $1$ and
$n>k$ such that $\mathrm{X}$ satisfies the $k$-configuration exhibiting
condition but does not satisfy the convex Ramsey property.
\end{thm}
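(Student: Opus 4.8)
The plan is to build $\mathrm{X}$ by a probabilistic construction in which the columns are polarised into a ``heavy'' block and a ``light'' block, and then to verify the two requirements separately, each holding with probability tending to $1$ as the size grows. Fix a small constant $\rho\in(0,1/8)$, say $\rho=1/10$, and let $n$ be a large even integer. I would take $\mathrm{X}$ to be a random $n\times n$ matrix with $0$-$1$ entries that are mutually independent, where for each of the first $a:=n/2$ columns (the \emph{heavy} columns) the entry is $0$ with probability $\rho$ and $1$ otherwise, and for each of the last $a$ columns (the \emph{light} columns) the entry is $1$ with probability $\rho$. For a heavy column $j$ write $Z_{j}=\{i:x_{ij}=0\}$ for its sparse set of $0$-rows, and for a light column $j$ write $O_{j}=\{i:x_{ij}=1\}$ for its sparse set of $1$-rows; for a distribution $\mathrm{P}=(P_{1},\ldots,P_{n})$ on the rows set $P(Z_{j}):=\sum_{i\in Z_{j}}P_{i}$, and likewise $P(O_{j})$.

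For the $k$-configuration exhibiting condition I would use a direct union bound. Fix columns $l_{1}<\cdots<l_{k}$ and $\sigma:\{1,\ldots,k\}\to\{0,1\}$. On each column the less likely value still occurs with probability at least $\rho$, and entries are independent, so a fixed row realises the prescribed pattern with probability at least $\rho^{k}$; hence the probability that no row realises $(\sigma,l_{1},\ldots,l_{k})$ is at most $(1-\rho^{k})^{n}$. Summing over the at most $2^{k}\binom{n}{k}$ choices gives a failure probability at most $2^{k}\binom{n}{k}(1-\rho^{k})^{n}$, which tends to $0$ as $n\to\infty$ for fixed $k$ and $\rho$.

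The key point, and the step I expect to be the main obstacle, is that the convex Ramsey condition must fail for \emph{every} distribution $\mathrm{P}$ on the rows simultaneously; this is an infinitary quantifier, not a statement about one fixed $\mathrm{P}$, so an ``in expectation'' estimate does not suffice. I would reduce it to a finite high-probability estimate by a degree bound that is uniform in $\mathrm{P}$. For any $\mathrm{P}$ one has $\min_{j\ \mathrm{heavy}}P(Z_{j})\le\frac{1}{a}\sum_{j\ \mathrm{heavy}}P(Z_{j})=\frac{1}{a}\sum_{i}P_{i}\,d_{i}\le\frac{1}{a}\max_{i}d_{i}$, where $d_{i}$ is the number of heavy columns that are $0$ in row $i$, and the final inequality uses $\sum_{i}P_{i}=1$. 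Each $d_{i}$ is a sum of $a$ independent $\mathrm{Bernoulli}(\rho)$ variables, so a Chernoff bound together with a union bound over the $n$ rows shows that with probability tending to $1$ we have $\max_{i}d_{i}\le2a\rho$, whence $\min_{j\ \mathrm{heavy}}P(Z_{j})\le2\rho$ for all $\mathrm{P}$ at once; the symmetric argument gives $\min_{j\ \mathrm{light}}P(O_{j})\le2\rho$ for all $\mathrm{P}$.

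Granting these events, fix any $\mathrm{P}$ and let $q_{j}$ denote the $j$-th coordinate of the convex combination $\mathrm{P}\times\mathrm{X}$. For a heavy column $q_{j}=1-P(Z_{j})$, so $\max_{j}q_{j}\ge1-2\rho$, while for a light column $q_{j}=P(O_{j})$, so $\min_{j}q_{j}\le2\rho$; hence the spread satisfies $\max_{j}q_{j}-\min_{j}q_{j}\ge1-4\rho>1/2$. Taking the Dirac-weight matrix $\mathrm{W}$ that selects the maximal and minimal coordinates gives $\mathrm{P}\times\mathrm{X}\times\mathrm{W}>1/2$, so no $\mathrm{P}$ witnesses the convex Ramsey condition and $\mathrm{X}$ fails it. Finally, a further union bound over pairs shows that all rows are distinct with probability tending to $1$, so the no-repeated-rows hypothesis of the convex Ramsey definition is met; intersecting the three asymptotically sure events yields, for all large $n$, a concrete $n\times n$ matrix with $n>k$ that satisfies the $k$-configuration exhibiting condition but not the convex Ramsey condition.
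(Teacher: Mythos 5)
Your proof is correct, and it rests on the same random construction as the paper's: a polarized $0$-$1$ matrix in which half of the columns are biased toward $1$ and half toward $0$, with the $k$-configuration exhibiting condition checked by exactly the same union bound. The genuine difference lies in the parameters and in how the failure of the convex Ramsey condition is made uniform over all probability matrices $\mathrm{P}$. The paper starts from the deterministic block matrix and flips entries with a \emph{vanishing} probability $p=n^{-1/k+\epsilon}$; it then argues by contradiction, pairing each heavy column $j$ with the light column $j+n/2$, showing each such pair forces a contribution of at least $1/4$ from the flipped entries, so that the total over all pairs is at least $n/8$, while a Chernoff bound on the number of flips per row caps that same quantity at $n/16$. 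You instead keep the bias constant ($\rho=1/10$) and prove a spread bound valid for every $\mathrm{P}$ at once: the averaging inequality $\min_{j}P(Z_{j})\le\frac{1}{a}\max_{i}d_{i}$, combined with a Chernoff bound on the row degrees $d_{i}$, shows that some heavy column carries mass at least $1-2\rho$ and some light column carries mass at most $2\rho$, and the Dirac weight selecting these two columns violates the Ramsey inequality outright. Both proofs share the essential mechanism---reducing the universal quantifier over $\mathrm{P}$ to a high-probability property of row counts---but your route buys several things: it is uniform in $k$ (the paper must retune $p$ as $k$ grows, and its configuration estimate weakens to a stretched exponential), it exhibits a violating Dirac weight directly rather than deriving a contradiction, it avoids the paper's loose bookkeeping with the signs $\xi_{i}$ and the Kullback--Leibler estimate, and it explicitly verifies the no-repeated-rows hypothesis in the definition of the convex Ramsey condition, a point the paper passes over in silence.
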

\begin{proof}
We use the probabilistic method in combinatorics (see in \cite{MR3524748})
in order to show that such a matrix $\mathrm{X}$ exists. As we have
mentioned before for $k=1$ and $n=6$ existence of $\mathrm{X}$
is already proved in \cite{MR3524748} (and for $n\geq6$ a similar
idea can be modified). For the sake of simplicity we only present
the argument for $k=2$. At the end we mention what kind of change
is needed for $k>2$. 

Suppose $\mathrm{Y}$ is the following $n\times n$-matrix 
\[
\begin{pmatrix}1 & \cdots & 1 & 0 & \cdots & 0\\
1 & \cdots & 1 & 0 & \cdots & 0\\
1 & \cdots & 1 & 0 & \cdots & 0\\
1 & \cdots & 1 & 0 & \cdots & 0\\
1 & \cdots & 1 & 0 & \cdots & 0\\
1 & \cdots & 1 & 0 & \cdots & 0
\end{pmatrix}_{n\times n},
\]
where in each row the first $\frac{n}{2}$ entries are 1 and the rest
of the entries are 0. Now we construct matrix $\mathrm{X}$ from $\mathrm{Y}$
in the following random procedure. Independently, with probability
$p$ ($0<p<1$ and $q:=1-p>p$) change the entries of $\mathrm{Y}$:
if 1 to 0 and if 0 to 1. 

We first check the $2$-configuration condition of $\mathrm{X}$.
Write $\mbox{X}^{j}$ for the column $j$ of matrix $\mathrm{X}$
where $1\leq j\leq n$ and define $\mathrm{X}^{jk}:=\mathrm{X}^{j}\mathrm{X}^{k}$
where $1\leq j<k\leq n$. We now calculate the probability of the
failure of the $2$-configuration condition for $\mathrm{X}$. First
notice that the followings are true:
\begin{itemize}
\item $\mbox{Pr}\left(00\mbox{ occur in row \ensuremath{s} of }\mathrm{X}^{jk}\right)=\mbox{Pr}\left(11\mbox{ occur in row \ensuremath{s} of }\mathrm{X}^{lm}\right)=\mbox{Pr}\left(01\mbox{ occur in row \ensuremath{s} of }\mathrm{X}^{jl}\right)$
where $1\leq j,k\leq\frac{n}{2}<l,m\leq n$ and $1\leq s\leq n$.
\item $\mbox{Pr}\left(00\mbox{ occur in row \ensuremath{s} of }\mathrm{X}^{jk}\right)\leq\mbox{Pr}\left(\sigma\mbox{ occur in row \ensuremath{s} of }\mathrm{X}^{jk}\right)$
where $\sigma\in\left\{ 00,01,11,10\right\} $.
\end{itemize}
Then 
\[
\begin{array}{ccc}
\mbox{Pr}\left(\exists j,k\mbox{ s.t. }\mbox{2-config. fails for }\mbox{\ensuremath{\mathrm{X}}}^{jk}\right) & \leq & \sum_{j,k}\mbox{Pr}\left(\mbox{2-config. fails for }\mbox{\ensuremath{\mathrm{X}}}^{jk}\right)\\
 & \leq & \sum_{j,k}4\cdot\mbox{Pr}\left(00\mbox{ does not occur in any row of }\mbox{\ensuremath{\mathrm{X}}}^{jk}\right)\\
 & \leq & 4\cdot\PK n2\cdot\left(1-p^{2}\right)^{n}\\
 & \leq & 2\cdot n\cdot\left(n-1\right)\cdot e^{-p^{2}\cdot n}
\end{array}.
\]
Let $p:=n^{-\frac{1}{2}+\epsilon}$ for a fixed and small $\epsilon>0$.
Then $\mbox{Pr}\left(\exists j,k\mbox{ s.t. }\mbox{2-config. fails for }\mbox{\ensuremath{\mathrm{X}}}^{jk}\right)\rightarrow0$
as $n\rightarrow\infty$. Therefore, for large enough $n$ the random
matrix $\mathrm{X}$ satisfies the $2$-configuration condition almost
surely (i.e. with probability tending to 1 as $n\rightarrow\infty$). 

Now we want to prove that the convex Ramsey property eventually fails
for $\mathrm{X}$ when $n\rightarrow\infty$. Suppose on the contrary
that $\mathrm{X}$ satisfies the convex Ramsey property. Let $\mbox{\ensuremath{\mathrm{P}}}:=\begin{pmatrix}\mathsf{p}_{1} & \cdots & \mathsf{p}_{n}\end{pmatrix}_{1\times n}$
be a probability matrix such that
\[
\mathrm{P}\times\mathrm{X}\times\mathrm{W}\leq\frac{1}{2},
\]
for every Dirac-weight $n\times1$-matrix $\mathrm{W}$. Fix the following
notations: Define $\mathcal{I}^{j}:=\left\{ i:x_{ij}\neq y_{ij}\right\} $
for $1\leq j\leq n$. For $1\leq j\leq\frac{n}{2}$ write $\bar{j}:=j+\frac{n}{2}$
and define $\mathrm{W}^{j}$ be the Dirac-weight $n\times1$-matrix
that $w_{j,1}^{j}=1$ and $w_{\bar{j},1}^{j}=-1$. Then the convex
Ramsey property for $\mathrm{X}$ implies the following for $1\leq j\leq\frac{n}{2}$:
\[
\mathrm{P}\times\mathrm{X}\times\mathrm{W}^{j}=\sum_{i\notin\mathcal{I}^{j}\cup\mathcal{I}^{\bar{j}}}\mathsf{p}_{i}+\sum_{i\in\mathcal{I}^{j}\cup\mathcal{I}^{\bar{j}}}\xi_{i}\cdot\mathsf{p}_{i}\leq\frac{1}{2};
\]
where $\xi_{i}=\left\{ 1,0,-1\right\} $. Then $\sum_{i\in\mathcal{I}^{j}\cup\mathcal{I}^{\bar{j}}}\xi_{i}\cdot\mathsf{p}_{i}\geq\frac{1}{4}$
as $\sum_{i}\mathsf{p}_{i}=1$. Hence
\[
\sum_{1\leq j\leq\frac{n}{2}}\sum_{i\in\mathcal{I}^{j}\cup\mathcal{I}^{\bar{j}}}\xi_{i}\cdot\mathsf{p}_{i}\geq\frac{n}{2}\cdot\frac{1}{4}=\frac{n}{8}.
\]
Note that $\sum_{1\leq j\leq\frac{n}{2}}\sum_{i\in\mathcal{I}^{j}\cup\mathcal{I}^{\bar{j}}}\xi_{i}\cdot\mathsf{p}_{i}=\sum_{1\leq i\leq n}n_{i}\cdot\xi_{i}\cdot\mathsf{p}_{i}$
where $n_{i}:=\left\{ j:x_{ij}\neq y_{ij}\right\} $. 

Moreover, notice that $n_{i}$ has the binomial distribution $B\left(n,p\right)$,
and $p\leq\frac{n}{32}$ for large enough $n$. Thus using the Chernoff
bound one obtains the following 
\[
\begin{array}{ccc}
\sum_{1\leq i\leq n}\mbox{Pr}\left(n_{i}\geq\frac{n}{16}\right) & = & n\cdot\mbox{Pr}\left(n_{1}\geq\frac{n}{16}\right)\\
 & \leq & n\cdot\exp\left(-n\cdot D\left(\frac{n}{16}\parallel p\right)\right)\\
 & \leq & n\cdot\exp\left(-n\cdot D\left(\frac{n}{16}\parallel\frac{n}{32}\right)\right)\\
 & \leq & n\cdot e^{-c\cdot n}
\end{array}
\]
for some constant $c$ where $D\left(x\parallel y\right)=x\ln\frac{x}{y}+\left(1-x\right)\ln\left(\frac{1-x}{1-y}\right)$
is the Kullback-Leibler divergence. Note that $n\cdot e^{-c\cdot n}\rightarrow0$
when $n\rightarrow\infty$. Therefore, almost surely $n_{i}\leq\frac{n}{16}$
for all $i$ 
\[
\frac{n}{16}\geq\sum_{1\leq i\leq n}n_{i}\cdot\xi_{i}\cdot\mathsf{p}_{i}=\sum_{1\leq j\leq\frac{n}{2}}\sum_{i\in\mathcal{I}^{j}\cup\mathcal{I}^{\bar{j}}}\xi_{i}\cdot\mathsf{p}_{i}\geq\frac{n}{8},
\]
which is a contradiction. Hence, the randomly generated matrix $\mathrm{X}$
almost surely does not satisfy the convex Ramsey condition for large
enough $n$.

For $k>2$ a similar argument works and we only need to consider $p=n^{-\frac{1}{k}+\epsilon}$
this time.
\end{proof}

\section{Generic structures with embeddings that form free-pseudoplanes }
\label{sec:main}

Similar to Section 4 in \cite{GKP-Amen}, using Theorem 32 in \cite{GKP-Amen}
we present a sufficient condition for two embeddings to show that the
automorphism groups of certain generic structures are not amenable.
The condition is called forming a \emph{free-pseudoplane} and automorphism groups
of generic structures with such embeddings include especially
the automorphism groups of generic structures that are obtained from
pre-dimensions with irrational coefficients (see Subsection $\ref{subsec: pre-dim}$).
\begin{defn}
\label{def:imp} Let $\left(\mathcal{C},\leqslant\right)$ be a smooth
class with AP, and let $\mathcal{M}$ be the $\left(\mathcal{C},\leqslant\right)$-generic
structure. Suppose $X\subseteq\mathcal{M}$ and let $A,B\in\mathcal{C}$
with $A\leqslant B$. Suppose $k\geq2$ is an integer such that $k<\left|\PK BA\right|$. 
\begin{enumerate}
\item We call embeddings $\Lambda\in\PK XB$ a \emph{$B$-line in $X$ }and
embedding $\Gamma\in\PK XA$ an \emph{$A$-point in $X$.} 
\item We say two $B$-lines $\Lambda^1$ and $\Lambda^2$
in $X$ are \emph{connected via} \emph{a path} in $X$ if there are
$d\geq1$ and $B$-lines $\Lambda_{i_{0}},\cdots,\Lambda_{i_{d}}$
in $X$ such that $\Lambda_{i_{0}}=\Lambda^1$,
$\Lambda_{i_{d}}=\Lambda^2$, and $\Lambda_{i_{j}}\left(B\right)\cap\Lambda_{i_{j+1}}\left(B\right)$
contains at least one $A$-point (in $X$) for each $0\leq j<d$.
We say $\Lambda^1$ and $\Lambda^2$
have \emph{distance} $d$ if $d+1$ is the minimum number of embeddings
needed to connect $\Lambda^1$ and $\Lambda^2$
via a path. 
\item We say $\Lambda^1$ and $\Lambda^2$
lay on an $m$-cycle (for $m\geq2$) of $B$-lines in $X$ if there
are distinct $B$-lines $\Lambda_{i_{0}},\cdots,\Lambda_{i_{m-1}}$
in $X$ where $\Lambda_{i_{0}}=\Lambda^1$, $\Lambda_{i_{m-1}}=\Lambda^2$
such that:
\begin{enumerate}
\item $\Lambda_{i_{0}}\left(B\right)\cap\Lambda_{i_{m-1}}\left(B\right)$
and $\Lambda_{i_{j}}\left(B\right)\cap\Lambda_{i_{j+1}}\left(B\right)$
contain at least one $A$-point, for each $0\leq j<m-1$; 
\item The intersection of any two distinct elements of the set  \\ $\left\{ \Lambda_{i_{0}}\left(B\right)\cap\Lambda_{i_{m-1}}\left(B\right),\Lambda_{i_{j}}\left(B\right)\cap\Lambda_{i_{j+1}}\left(B\right):0\leq j<m-1\right\} $ does not
contain a common $A$-point. 
\end{enumerate}
\item We say $\left(A;B\right)$ forms a \emph{$k$-pseudoplane} in $X$
if every two distinct $B$-lines in $X$ intersect in at most $\left(k-1\right)$-many
$A$-points. 
\item Suppose $\Lambda$ is a $B$-line in $X$. Let
\[
\mathcal{I}_{X}\left(\Lambda\right):=\left\{ \Gamma\left(A\right):\Gamma\in\PK{\mathcal{M}}A,\Gamma\left(A\right)\subseteq\Lambda\left(B\right),\exists\Lambda'\neq\Lambda,\Lambda'\left(B\right)\subseteq X,\Gamma\left(A\right)\subseteq\Lambda'\left(B\right)\right\} ,
\]
and $\mathsf{I}_{X}\left(\text{\ensuremath{\Lambda}}\right):=\left|\mathcal{I}_{X}\left(\Lambda\right)\right|$
(i.e. $\mathcal{I}_{X}\left(\Lambda\right)$ is the set of all $A$-points
in $X$ that lay in strictly more than one $B$-point in $X$). Define
$\mathcal{H}_{k}^{\Lambda}\left(X\right):=\Lambda\left(B\right)\backslash\bigcup\mathcal{I}_{X}\left(\Lambda\right)$
when $\mathsf{I}_{X}\left(\text{\ensuremath{\Lambda}}\right)\leq k$;
otherwise let $\mathcal{H}_{k}^{\Lambda}\left(X\right)=\emptyset$.
\item Suppose $X\subseteq\mathcal{M}$ is a finite substructure and let
$\left(\Lambda_{i}:0\leq i<b\right)$ be an enumeration of all $B$-lines
in $X$ where $b=\left|\PK XB\right|$. Let $X_{0}:=X$ and define
inductively $X_{i+1}:=X_{i}\backslash\mathcal{H}_{k}^{\Lambda_{i}}\left(X_{i}\right)$
for $1\leq i<b$. We say \emph{$\left(A;B\right)$ }forms a \emph{free}
\emph{$k$-}pseudoplane in $X$ if there is an enumeration $\left(\Lambda_{i}:i\in b\right)$
of all $B$-lines in $X$ such that $\PK{X_{b}}A=\emptyset$. We say
\emph{$\left(A;B\right)$ }forms a\emph{ }free \emph{$k$-}pseudoplane
in an infinite subset $X\subseteq\mathcal{M}$ if \emph{$\left(A;B\right)$
}form a\emph{ }free \emph{$k$-}pseudoplane in every finite subset
of $X$.
\end{enumerate}
\end{defn}
\begin{rem}
\label{rem:k-free}
\begin{enumerate}
\item Using this new terminology a tree-pair (Definition 39 in \cite{GKP-Amen})
is a free $2$-pseudoplane. However, the converse is not true.
\item Note that for any finite substructure $X\subseteq\mathcal{M}$ and
any enumeration $\left(\Lambda_{i}:0\leq i<b\right)$ of all $B$-lines
in $X$, when $i\rightarrow\infty$ the set $X_{i+1}:=X_{i}\backslash\mathcal{H}_{k}^{\Lambda_{i^{*}}}\left(X_{i}\right)$
eventually remains fixed where $i^{*}\equiv^{b}i$ and $0\leq i^{*}\leq b$
. If \emph{$\left(A;B\right)$ }forms a\emph{ }free \emph{$k$-}pseudoplane
in $\mathcal{M}$, then for every enumeration that we choose there
is $i_{0}$ such that $\PK{X_{i_{0}}}A=\emptyset$.
\end{enumerate}
\end{rem}
Then we can prove the following which is similar to Theorem 40 in \cite{GKP-Amen}.
\begin{thm}
\label{thm:2-pseudo} Suppose $\left(\mathcal{C},\leqslant\right)$
is a smooth class with AP, and $\mathcal{M}$ is the $\left(\mathcal{C},\leqslant\right)$-generic
structure. Suppose for $n\in\mathbb{N}$ there is $A,B\in\mathcal{C}$
such that $\left|\PK BA\right|=n$ and $\left(A;B\right)$ is a free
$2$-pseudoplane. Moreover assume there is an $n\times m$-matrix
$\mathrm{X}$ such that $\mathrm{X}$ satisfies 2-configuration exhibiting
condition but does not satisfy the convex Ramsey property. Then $\Aut\left(\mathcal{M}\right)$
is not amenable.
\end{thm}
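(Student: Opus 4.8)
The plan is to invoke the correspondence of \cite{GKP-Amen} (Theorem~32): since $\Aut\left(\mathcal{M}\right)$ is amenable exactly when every $2$-coloring matrix of $\mathcal{M}$ satisfies the convex Ramsey condition, it suffices to exhibit a single coloring $f:\PK{\mathcal{M}}A\rightarrow\left\{ 0,1\right\} $ whose coloring matrix fails that condition. First I would unwind what this matrix is. For $\mathsf{p}\in\FK{\mathcal{M}}B$ and $\mathsf{q}_{1},\mathsf{q}_{2}\in\FK{\mathsf{p}}A$ induced by the Dirac measures at two $A$-points $\Gamma_{1},\Gamma_{2}\in\PK BA$, linearity gives $f\left(\mathsf{q}_{1}\right)-f\left(\mathsf{q}_{2}\right)=\sum_{\Lambda}\mathsf{p}\left(\Lambda\right)\left(f\left(\Lambda\circ\Gamma_{1}\right)-f\left(\Lambda\circ\Gamma_{2}\right)\right)$. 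Hence, indexing the columns of the coloring matrix by the $A$-points $\PK BA$ and its rows by the color patterns $\left(f\left(\Lambda\circ\Gamma\right)\right)_{\Gamma\in\PK BA}$ carried by the $B$-lines $\Lambda$, the convex Ramsey property for $f$ is precisely the convex Ramsey condition for that $0/1$ matrix, with probabilities ranging over rows ($B$-lines) and Dirac differences over columns ($A$-points); for the square matrices produced in Theorem~\ref{thm:mat} this matches $\mathrm{X}$ directly. The aim is thus to define $f$ so that \emph{every} $B$-line of $\mathcal{M}$ realizes a row of $\mathrm{X}$.

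Second, I would construct such an $f$ from the free $2$-pseudoplane structure and the $2$-configuration condition on $\mathrm{X}$, which are tailored to one another. On a finite substructure fix an enumeration $\left(\Lambda_{i}:0\le i<b\right)$ of its $B$-lines witnessing that $\left(A;B\right)$ is a free $2$-pseudoplane, so that the peeling $X_{0}\supseteq X_{1}\supseteq\cdots$ removes all $A$-points. I would color the $A$-points in the order reverse to that in which they are peeled off: when $\Lambda_{i}$ is treated, the $A$-points it shares with the lines treated earlier are exactly those of $\mathcal{I}_{X_{i}}\left(\Lambda_{i}\right)$, of which there are at most $k=2$, and these already carry colors; by the $2$-configuration condition there is a row of $\mathrm{X}$ agreeing with those (at most two) prescribed values, and I color the remaining private points $\mathcal{H}_{2}^{\Lambda_{i}}\left(X_{i}\right)$ of $\Lambda_{i}$ according to it. Since the private points lie on no other line this creates no conflict, and $\Lambda_{i}$ realizes a row of $\mathrm{X}$. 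As $\left(A;B\right)$ is a free $2$-pseudoplane in every finite subset of $\mathcal{M}$, I would glue these finite colorings along a chain $\mathcal{M}=\bigcup_{i}A_{i}$ as in Proposition~\ref{prop:sgen} (with a compactness argument on partial colorings if needed), coloring any $A$-point that lies on no $B$-line arbitrarily.

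Finally, I would conclude. For any $\mathsf{p}\in\FK{\mathcal{M}}B$ every $B$-line in its support realizes a row of $\mathrm{X}$, so $g\left(\Gamma\right):=\sum_{\Lambda}\mathsf{p}\left(\Lambda\right)f\left(\Lambda\circ\Gamma\right)$ is a convex combination of rows of $\mathrm{X}$. Because $\mathrm{X}$ does not satisfy the convex Ramsey condition, no distribution over its rows keeps all column averages within $\frac{1}{2}$, so $\max_{\Gamma}g\left(\Gamma\right)-\min_{\Gamma}g\left(\Gamma\right)>\frac{1}{2}$; choosing $\Gamma_{1},\Gamma_{2}$ attaining this maximum and minimum yields $\mathsf{q}_{1},\mathsf{q}_{2}\in\FK{\mathsf{p}}A$ with $\left|f\left(\mathsf{q}_{1}\right)-f\left(\mathsf{q}_{2}\right)\right|>\frac{1}{2}$. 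Since $\mathsf{p}$ was arbitrary, no measure witnesses the convex $\leqslant$-Ramsey property for $f$, and by Theorem~32 in \cite{GKP-Amen} the group $\Aut\left(\mathcal{M}\right)$ is not amenable.

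I expect the main obstacle to be ensuring that \emph{every} $\leqslant$-closed copy of $B$ in $\mathcal{M}$, and not only the lines I build by hand, realizes a row of $\mathrm{X}$: the convex Ramsey property quantifies over all $\mathsf{p}\in\FK{\mathcal{M}}B$, so a single $B$-line bearing a balanceable (say constant) pattern would already provide a witnessing measure and sink the argument. This is exactly the point at which the free $2$-pseudoplane hypothesis is indispensable, since its good peeling order presents each line with at most two previously colored $A$-points, dovetailing with the $2$-configuration condition on $\mathrm{X}$; the delicate part is to make the inductive coloring cohere globally so that this holds for all $B$-lines of $\mathcal{M}$ simultaneously.
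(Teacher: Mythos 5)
Your proposal is correct and follows essentially the same route as the paper's proof: color the $B$-lines in the reverse of the peeling order supplied by the free $2$-pseudoplane, use the $2$-configuration exhibiting condition to match the at most two already-colored shared $A$-points, globalize the finite consistent colorings by a compactness argument (the paper invokes Rado's selection lemma), and conclude via Theorem~32 of \cite{GKP-Amen}. Your extra detail unwinding the coloring matrix and the final Dirac-weight estimate only makes explicit what the paper leaves implicit.
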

\begin{proof}
We present a coloring function $f:\PK{\mathcal{M}}A\rightarrow\left\{ 0,1\right\} $
such that the full-coloring matrix of $f$ for copies of $B$ in $\mathcal{M}$
is the matrix $\mathrm{X}$. We first prove that one can assign a
\emph{consistent} coloring for every finite subset of $\mathcal{M}$
using rows of $\mathrm{X}$. 

Since $\left(A;B\right)$ forms a free 2-pseudoplane then for every
finite substructure $C$ of $M$ there is an enumeration $\left(\Lambda_{i}:0\leq i<c\right)$
of embeddings of $B$-lines in $C$ such that $C_{0}:=C$ and $\PK{C_{c}}A=\emptyset$
where $c=\left|\PK CB\right|$ and $C_{i+1}:=C_{i}\backslash\mathcal{H}_{k}^{\Lambda_{i}}\left(C_{i}\right)$
is inductively defined. We consistently color $B$-lines using rows
of $\mathrm{X}$, inductively. Start with $\Lambda_{c-1}$. Pick a
row in the matrix $\mathrm{X}$ and assign a coloring for $A$-points
of $\Lambda_{c-1}\left(B\right)$ according to the row. Now suppose
a consistent coloring for $\Lambda_{i}\left(B\right)$ is already
chosen according to the rows of the matrix $\mathrm{X}$. Then $\Lambda_{i-1}\left(B\right)$
interests with $A$-lines of $C_{i}$ in at most $2$ many $A$-points.
Since the matrix $\mathrm{X}$ satisfies the 2-configuration exhibiting
condition then there is a row in $\mathrm{X}$ that the colorings
of $A$-points agree with the coloring of $A$-points in the intersection.
Hence, we can pick a coloring for $\Lambda_{i-1}$ from rows of $\mathrm{X}$
in a consistent way and therefore one can use matrix $\mathrm{X}$
to color all $B$-points of $C$.

The matrix $\mathrm{X}$ have only finitely many rows and by the above
argument for every finite subset of $\mathcal{M}$ there is a consistent
coloring using $\mathrm{X}$. Then by Rado's selection lemma in \cite{MR0270920}
there is a coloring $f$ of $A$-points of $\mathcal{M}$ that its
coloring matrix of $B$-lines in $\mathcal{M}$ is $\mathrm{X}$.\footnote{The second author would like to thank Martin Ziegler for suggesting
this shorter proof.} Then Theorem 32 in \cite{GKP-Amen} implies that $\Aut\left(\mathcal{M}\right)$
does not have the convex $\leqslant$-Ramsey property with respect
to $\left(\mathcal{C},\leqslant\right)$ and hence not $\Aut\left(\mathcal{M}\right)$
is not amenable. 
\end{proof}
Similarly, one can prove the following
\begin{thm}
\label{thm:2-pseudo-1} Suppose $\left(\mathcal{C},\leqslant\right)$
is a smooth class with AP, and $\mathcal{M}$ is the $\left(\mathcal{C},\leqslant\right)$-generic
structure. Assume for $n\in\mathbb{N}$ there are $A,B\in\mathcal{C}$
such that $\left|\PK BA\right|=n$ and $\left(A;B\right)$ is a free
k-pseudoplane and there is an $n\times m$-matrix $\mathrm{X}$ such
that $\mathrm{X}$ satisfies k-configuration exhibiting condition
but does not satisfy the convex Ramsey property. Then $\Aut\left(\mathcal{M}\right)$
is not amenable.
\end{thm}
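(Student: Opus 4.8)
The plan is to mirror the proof of Theorem~\ref{thm:2-pseudo} verbatim, with the constant $2$ replaced by $k$ and the $2$-configuration exhibiting condition replaced by the $k$-configuration exhibiting condition. The goal is to produce a single $2$-coloring $f\colon\PK{\mathcal{M}}A\to\left\{0,1\right\}$ such that the colour pattern $f$ induces on each $B$-line of $\mathcal{M}$ is a row of $\mathrm{X}$; the full-coloring matrix of $f$ for copies of $B$ is then a matrix whose rows form a subset of the rows of $\mathrm{X}$. Since any such sub-row-matrix of $\mathrm{X}$ inherits the failure of the convex Ramsey condition (a probability vector supported on a subset of the rows is a special case of a probability vector on all rows, so the same Dirac-weight witnessing the failure for $\mathrm{X}$ witnesses it for the submatrix), Theorem 32 of \cite{GKP-Amen} will give that $\Aut\left(\mathcal{M}\right)$ lacks the convex $\leqslant$-Ramsey property with respect to $\left(\mathcal{C},\leqslant\right)$ and hence is not amenable. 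By Rado's selection lemma \cite{MR0270920}, and because $\mathrm{X}$ has only finitely many rows, it is enough to colour every finite substructure $C\subseteq\mathcal{M}$ consistently, i.e.\ to choose for each $B$-line of $C$ a row of $\mathrm{X}$ so that any two $B$-lines sharing an $A$-point assign it the same value.

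Fix a finite $C\subseteq\mathcal{M}$. Using the free $k$-pseudoplane hypothesis I would select an enumeration $\left(\Lambda_{i}:0\leq i<c\right)$ of the $B$-lines in $C$ that witnesses freeness, so that with $C_{0}=C$ and $C_{i+1}=C_{i}\backslash\mathcal{H}_{k}^{\Lambda_{i}}\left(C_{i}\right)$ one has $\PK{C_{c}}A=\emptyset$, and then colour the lines in the reverse order $\Lambda_{c-1},\Lambda_{c-2},\dots,\Lambda_{0}$. The base step colours $\Lambda_{c-1}$ by an arbitrary row of $\mathrm{X}$. At the inductive step, when $\Lambda_{i-1}$ is reached, the $A$-points of $\Lambda_{i-1}\left(B\right)$ already assigned a colour are exactly those lying on a later line $\Lambda_{j}$ with $j\geq i$; provided the stage-$(i-1)$ peel is effective, the definition of the peeling identifies these with $\Lambda_{i-1}\left(B\right)\cap C_{i}=\bigcup\mathcal{I}_{C_{i-1}}\left(\Lambda_{i-1}\right)$, while the free part $\mathcal{H}_{k}^{\Lambda_{i-1}}\left(C_{i-1}\right)$ remains uncoloured. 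Because $\mathcal{H}_{k}$ peels only when $\mathsf{I}_{C_{i-1}}\left(\Lambda_{i-1}\right)\leq k$, this shared part contains at most $k$ many $A$-points, each already coloured at an earlier reverse-order step.

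Thus colouring $\Lambda_{i-1}$ amounts to choosing a row of $\mathrm{X}$ that agrees, on the at most $k$ columns corresponding to its already-coloured $A$-points, with the colours fixed before; padding to exactly $k$ columns with arbitrary target values and invoking the $k$-configuration exhibiting condition yields such a row, which then also colours the free $A$-points of $\Lambda_{i-1}$. As $\PK{C_{c}}A=\emptyset$, every $A$-point of $C$ is covered by some peeled free part and so receives a colour exactly once, and consistency is preserved throughout. The main obstacle is precisely this bookkeeping: one must verify that at the moment $\Lambda_{i-1}$ is processed all of its constrained $A$-points have already been coloured---this is where the reverse peeling order is essential, since a shared point is fixed once, by the latest-peeled line through it---and that their number never exceeds $k$, which is exactly the defining feature of $\mathcal{H}_{k}$. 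This is the only place the value of $k$ enters, and the argument goes through for general $k$ just as for $k=2$, the sole additional ingredient being the padding step that reduces a configuration on fewer than $k$ columns to the $k$-configuration exhibiting condition.
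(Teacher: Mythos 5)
Your proposal is correct and follows essentially the same route as the paper: the paper proves Theorem \ref{thm:2-pseudo-1} simply by remarking that the proof of Theorem \ref{thm:2-pseudo} (reverse-order colouring along a peeling enumeration, Rado's selection lemma, and Theorem 32 of \cite{GKP-Amen}) goes through with $2$ replaced by $k$, which is exactly what you carry out. Your two added touches---padding to exactly $k$ columns before invoking the $k$-configuration exhibiting condition, and noting that failure of the convex Ramsey condition is inherited by row-submatrices---are harmless refinements that, if anything, make the argument more careful than the paper's own sketch.
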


\subsection{Pre-dimensions with irrational coefficients. }

\label{subsec: pre-dim}

Let $\mathcal{\mathbf{K}}$ be the class of all finite graphs and
$\alpha\in\left(0,1\right)\backslash\mathbb{Q}$. Define $\delta_{\alpha}:\mathcal{\mathcal{\mathbf{K}}}\longrightarrow\mathbb{R}$
as $\delta_{\alpha}\left(A\right)=\left|A\right|-\alpha\cdot\left|\mathfrak{R}\left(A\right)\right|$
where $\mathfrak{R}\left(A\right)$ is the set $\mathfrak{R}$-relations
of $A$ (set of all edges of $A$). For every $A\subseteq B\in\mathcal{\mathbf{K}}$,
define $A\leqslant_{\alpha}B$ if and only if $\delta_{\alpha}\left(C\right)-\delta_{\alpha}\left(A\right)\geq0,$
for every $C$ with $A\subseteq C\subseteq B$. Finally, put $\mathcal{\mathbf{K}}_{\alpha}:=\left\{ A\in\mathcal{\mathbf{K}}:\delta_{\alpha}\left(A'\right)\geq0,\mbox{ for every }A'\subseteq A\right\} $. The class $\mathcal{\mathbf{K}}_{\alpha}$ contains the class of \emph{sparse graphs}.
\begin{notation}
Suppose $A,B,C\in\mathcal{\mathbf{K}}_{\alpha}$ and $A,B\subseteq C$.
Then let 
\[
\delta_{\alpha}\left(A/B\right):=\delta_{\alpha}\left(AB\right)-\delta_{\alpha}\left(B\right).
\]
\end{notation}
\begin{fact}
(See \cite{Balshi}) The $\left(\mathcal{\mathbf{K}}_{\alpha},\leqslant_{\alpha}\right)$
is a smooth class with the free-amalgamation property. 
\end{fact}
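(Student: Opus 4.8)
The plan is to reduce every clause to two facts about the pre-dimension $\delta_{\alpha}$: its \emph{submodularity} on finite graphs and its exact additivity on free amalgams. First I would record submodularity, namely
\[
\delta_{\alpha}\left(X\cup Y\right)+\delta_{\alpha}\left(X\cap Y\right)\leq\delta_{\alpha}\left(X\right)+\delta_{\alpha}\left(Y\right)
\]
for all finite $X,Y$ inside a common graph. This holds because vertices count additively, $\left|X\cup Y\right|+\left|X\cap Y\right|=\left|X\right|+\left|Y\right|$, while every edge counted on the right is counted on the left and $X\cup Y$ may carry extra cross-edges, so $\left|\mathfrak{R}\left(X\cup Y\right)\right|+\left|\mathfrak{R}\left(X\cap Y\right)\right|\geq\left|\mathfrak{R}\left(X\right)\right|+\left|\mathfrak{R}\left(Y\right)\right|$; multiplying by $-\alpha<0$ and combining gives the inequality.

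The elementary clauses are then immediate: $\delta_{\alpha}$ depends only on the isomorphism type, so $\mathcal{\mathbf{K}}_{\alpha}$ is closed under isomorphism and $\leqslant_{\alpha}$ is invariant under embeddings; $\mathcal{\mathbf{K}}_{\alpha}$ is closed under substructure since the defining condition $\delta_{\alpha}\left(A'\right)\geq0$ is hereditary; $\delta_{\alpha}\left(\emptyset\right)=0$ gives $\emptyset\in\mathcal{\mathbf{K}}_{\alpha}$; and $\leqslant_{\alpha}$ is reflexive by definition. For transitivity, given $A\leqslant_{\alpha}B\leqslant_{\alpha}C$ and $A\subseteq D\subseteq C$, I would apply submodularity to $D$ and $B$: since $A\subseteq D\cap B\subseteq B$ forces $\delta_{\alpha}\left(D\cap B\right)\geq\delta_{\alpha}\left(A\right)$ and $B\subseteq D\cup B\subseteq C$ forces $\delta_{\alpha}\left(D\cup B\right)\geq\delta_{\alpha}\left(B\right)$, submodularity yields $\delta_{\alpha}\left(D\right)\geq\delta_{\alpha}\left(A\right)$. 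The intersection axiom is the same move: for $A_{1}\leqslant_{\alpha}A$ with $A_{1},A_{2}\subseteq A$ and any $D$ with $A_{1}\cap A_{2}\subseteq D\subseteq A_{2}$, submodularity applied to $D$ and $A_{1}$, together with $D\cap A_{1}=A_{1}\cap A_{2}$ and $\delta_{\alpha}\left(D\cup A_{1}\right)\geq\delta_{\alpha}\left(A_{1}\right)$ (from $A_{1}\leqslant_{\alpha}A$, using $A_1\subseteq D\cup A_1\subseteq A$), gives $\delta_{\alpha}\left(D\right)\geq\delta_{\alpha}\left(A_{1}\cap A_{2}\right)$, i.e.\ $A_{1}\cap A_{2}\leqslant_{\alpha}A_{2}$.

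For the free-amalgamation property, set $D=B\otimes_{A}C$ and observe that since $D$ carries no edges between $B\setminus A$ and $C\setminus A$, for every $S\subseteq D$ with $S_{B}=S\cap B$, $S_{C}=S\cap C$ and $S_{A}=S\cap A=S_{B}\cap S_{C}$ one has the exact identity
\[
\delta_{\alpha}\left(S\right)=\delta_{\alpha}\left(S_{B}\right)+\delta_{\alpha}\left(S_{C}\right)-\delta_{\alpha}\left(S_{A}\right).
\]
To see $D\in\mathcal{\mathbf{K}}_{\alpha}$, I would invoke the already-proved intersection axiom inside $B$: from $A\leqslant_{\alpha}B$ and $A,S_{B}\subseteq B$ it gives $A\cap S_{B}=S_{A}\leqslant_{\alpha}S_{B}$, hence $\delta_{\alpha}\left(S_{B}\right)\geq\delta_{\alpha}\left(S_{A}\right)$, so $\delta_{\alpha}\left(S\right)\geq\delta_{\alpha}\left(S_{C}\right)\geq0$ because $S_{C}\subseteq C\in\mathcal{\mathbf{K}}_{\alpha}$. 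Finally $B\leqslant_{\alpha}D$ follows because any $S$ with $B\subseteq S\subseteq D$ has $S_{B}=B$ and $S_{A}=A$, whence $\delta_{\alpha}\left(S\right)-\delta_{\alpha}\left(B\right)=\delta_{\alpha}\left(S_{C}\right)-\delta_{\alpha}\left(A\right)\geq0$ by $A\leqslant_{\alpha}C$ applied to $A\subseteq S_{C}\subseteq C$; symmetrically $C\leqslant_{\alpha}D$. I expect the only genuinely delicate point to be the bookkeeping that upgrades submodularity to the \emph{equality} above — precisely that the absence of cross-edges in $B\otimes_{A}C$ turns the edge inequality into an identity; once this additivity formula is in hand, every remaining clause is a short application of submodularity with the hypotheses $A\leqslant_{\alpha}B$ and $A\leqslant_{\alpha}C$ applied to the correct intermediate subset. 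Irrationality of $\alpha$ plays no role here (only $\alpha>0$ is used) and becomes relevant only in the later analysis of the generic structure.
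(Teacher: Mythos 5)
Your proof is correct. Note that the paper itself gives no argument for this Fact: it is stated with a citation to \cite{Balshi}, so there is no internal proof to compare against. Your write-up is a legitimate self-contained verification, and the route you take is essentially the standard one from the cited literature (Baldwin--Shi \cite{Balshi}, and also Wagner \cite{Wag1}): everything is reduced to submodularity of $\delta_{\alpha}$, i.e.\ $\delta_{\alpha}\left(X\cup Y\right)+\delta_{\alpha}\left(X\cap Y\right)\leq\delta_{\alpha}\left(X\right)+\delta_{\alpha}\left(Y\right)$, together with the exact additivity $\delta_{\alpha}\left(S\right)=\delta_{\alpha}\left(S\cap B\right)+\delta_{\alpha}\left(S\cap C\right)-\delta_{\alpha}\left(S\cap A\right)$ for $S\subseteq B\otimes_{A}C$, which holds because the free amalgam has no edges between $B\setminus A$ and $C\setminus A$. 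The individual steps all check out: transitivity of $\leqslant_{\alpha}$ and the intersection axiom are both single applications of submodularity to the correct pair of sets (using $D\cap A_{1}=A_{1}\cap A_{2}$ in the latter); membership of $B\otimes_{A}C$ in $\mathbf{K}_{\alpha}$ follows from the already-established intersection axiom applied inside $B$ plus heredity of $\mathbf{K}_{\alpha}$; and $B,C\leqslant_{\alpha}B\otimes_{A}C$ follows from additivity together with $A\leqslant_{\alpha}C$ (resp.\ $A\leqslant_{\alpha}B$) applied to the intermediate set $S\cap C$ (resp.\ $S\cap B$). Your closing remark is also accurate: only $0<\alpha<1$ is used, and irrationality of $\alpha$ plays no role in this Fact (indeed the analogous statement holds for rational $\alpha$); it matters only later, e.g.\ in Fact \ref{fact:cofinality} and the construction of the pairs forming free pseudoplanes.
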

We call $\left(\mathcal{\mathbf{K}}_{\alpha},\leqslant_{\alpha}\right)$
an \emph{ab-initio }smooth class that is obtained from $\delta_{\alpha}$
and we write $\mathcal{\mathbf{M}}^{\alpha}$ for the countable $\left(\mathcal{\mathbf{K}}_{\alpha},\leqslant_{\alpha}\right)$-generic
structures that is obtained from Theorem \ref{thm:2-pseudo-1}. As we mentioned in the introduction, the theory  $\mathcal{\mathbf{M}}^{\alpha}$ 
is the zero-one law theory of graphs with the edge probability $n^{-\alpha}$
(see \cite{MR924703}).

Here are some important properties of $\delta_{\alpha}$
\begin{fact}
(See \cite{Balshi}) Suppose $A,B,C\subseteq\mathbf{M}^{\alpha}$
are finite subsets. Then the followings hold 
\begin{enumerate}
\item $\delta_{\alpha}\left(ABC\right)=\delta_{\alpha}\left(AB/C\right)+\delta_{\alpha}\left(C\right)=\delta_{\alpha}\left(A/BC\right)+\delta_{\alpha}\left(B/C\right)+\delta_{\alpha}\left(C\right).$
\item $\delta_{\alpha}(AB/C)\leq\delta_{\alpha}(A/C)+\delta_{\alpha}(B/C)-\delta_{\alpha}\left(\left(A\cap B\right)/C\right)$. 
\item Assume $A,B$ are disjoint and $\neg\mathfrak{R}^{\mathbf{M}^{\alpha}}\left(a,b\right)$
for every $a\in A$ and $b\in B$. Then $\delta_{\alpha}(AB/C)=\delta_{\alpha}(A/C)+\delta_{\alpha}(B/C)$. 
\end{enumerate}
\end{fact}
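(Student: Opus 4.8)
The plan is to treat all three items as elementary counting identities for the pre-dimension $\delta_{\alpha}(X)=|X|-\alpha\cdot|\mathfrak{R}(X)|$, recalling that the relative pre-dimension is defined by $\delta_{\alpha}(X/Y)=\delta_{\alpha}(XY)-\delta_{\alpha}(Y)$. Item (1) is then pure telescoping: the first equality is literally the definition rearranged, $\delta_{\alpha}(AB/C)=\delta_{\alpha}(ABC)-\delta_{\alpha}(C)$. For the second equality I would write $\delta_{\alpha}(A/BC)=\delta_{\alpha}(ABC)-\delta_{\alpha}(BC)$ and $\delta_{\alpha}(B/C)=\delta_{\alpha}(BC)-\delta_{\alpha}(C)$, and observe that adding these to $\delta_{\alpha}(C)$ cancels the two $\delta_{\alpha}(BC)$ terms and leaves $\delta_{\alpha}(ABC)$.

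For item (2) I would separate $\delta_{\alpha}$ into a vertex part and an edge part, setting $v(X/Y):=|X\cup Y|-|Y|$ and $e(X/Y):=|\mathfrak{R}(X\cup Y)|-|\mathfrak{R}(Y)|$, so that $\delta_{\alpha}(X/Y)=v(X/Y)-\alpha\cdot e(X/Y)$. On the vertex side one has $v(X/Y)=|X\setminus Y|$, and since $(A\setminus C)\cup(B\setminus C)=(A\cup B)\setminus C$ while $(A\setminus C)\cap(B\setminus C)=(A\cap B)\setminus C$, inclusion--exclusion gives the exact modular identity $v(AB/C)+v((A\cap B)/C)=v(A/C)+v(B/C)$. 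On the edge side only an inequality holds, and this is the real content. The two facts I would use are $\mathfrak{R}(AC)\cup\mathfrak{R}(BC)\subseteq\mathfrak{R}(ABC)$ and, because $\mathfrak{R}$ records induced edges, $\mathfrak{R}(AC)\cap\mathfrak{R}(BC)=\mathfrak{R}\bigl((A\cup C)\cap(B\cup C)\bigr)=\mathfrak{R}((A\cap B)C)$. Inclusion--exclusion then yields
\[
|\mathfrak{R}(AC)|+|\mathfrak{R}(BC)|=|\mathfrak{R}(AC)\cup\mathfrak{R}(BC)|+|\mathfrak{R}((A\cap B)C)|\leq|\mathfrak{R}(ABC)|+|\mathfrak{R}((A\cap B)C)|,
\]
which is exactly supermodularity of the edge count, $e(A/C)+e(B/C)-e((A\cap B)/C)\leq e(AB/C)$.

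To finish item (2) I would combine the two pieces: the vertex part is an equality, and since $\alpha>0$ multiplying the edge supermodularity by $-\alpha$ reverses the inequality. Adding gives $\delta_{\alpha}(A/C)+\delta_{\alpha}(B/C)-\delta_{\alpha}((A\cap B)/C)\geq\delta_{\alpha}(AB/C)$, which is the claim. Item (3) I would read off as the degenerate equality case: when $A,B$ are disjoint, $A\cap B=\emptyset$, so $\delta_{\alpha}((A\cap B)/C)=\delta_{\alpha}(\emptyset/C)=0$ and (2) already yields $\delta_{\alpha}(AB/C)\leq\delta_{\alpha}(A/C)+\delta_{\alpha}(B/C)$. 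The only slack in (2) came from the containment $\mathfrak{R}(AC)\cup\mathfrak{R}(BC)\subseteq\mathfrak{R}(ABC)$; any edge witnessing strict inclusion must join a vertex of $A\setminus C$ to a vertex of $B\setminus C$, and the hypothesis that there are no $\mathfrak{R}^{\mathbf{M}^{\alpha}}$-edges between $A$ and $B$ excludes precisely these. Hence $\mathfrak{R}(ABC)=\mathfrak{R}(AC)\cup\mathfrak{R}(BC)$, the edge inequality becomes an equality, and so does the whole bound, giving $\delta_{\alpha}(AB/C)=\delta_{\alpha}(A/C)+\delta_{\alpha}(B/C)$.

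There is no deep obstacle here; the entire argument is bookkeeping. The only points requiring care are the edge-set identities, especially the induced-subgraph fact $\mathfrak{R}(S)\cap\mathfrak{R}(T)=\mathfrak{R}(S\cap T)$, and the sign tracking: because $\delta_{\alpha}$ subtracts $\alpha$ times the edge count, it is supermodularity of edges (together with $\alpha>0$) that produces submodularity of $\delta_{\alpha}$, and one must keep the direction of the inequality straight through that reversal.
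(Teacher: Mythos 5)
Your proof is correct: item (1) is indeed pure telescoping from the definition of relative predimension, and your treatment of (2) and (3) — splitting $\delta_{\alpha}$ into a modular vertex count and a supermodular induced-edge count, using $\mathfrak{R}(AC)\cap\mathfrak{R}(BC)=\mathfrak{R}((A\cap B)C)$ and $\mathfrak{R}(AC)\cup\mathfrak{R}(BC)\subseteq\mathfrak{R}(ABC)$, with equality in (3) because the only possible extra edges would join $A\setminus C$ to $B\setminus C$ — is sound, including the sign reversal under multiplication by $-\alpha$. The paper itself gives no proof of this Fact (it simply cites Baldwin--Shi), and your argument is essentially the standard one underlying that reference, so there is nothing further to compare.
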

\begin{lem}
Suppose $\left(A;B\right)$ forms a 2-pseudoplane in $\mathcal{\mathbf{M}}^{\alpha}$.
If $\delta_{\alpha}\left(B/A\right)<\frac{\delta_{\alpha}\left(A\right)}{2}$,
then $\left(A;B\right)$ forms a free 2-pseudoplane.
\end{lem}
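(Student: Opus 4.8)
The plan is to prove the contrapositive of the combinatorial content of ``free $2$-pseudoplane'': assuming $\delta_{\alpha}\left(B/A\right)<\frac{\delta_{\alpha}\left(A\right)}{2}$, I will show that no finite family of $B$-lines can be ``stuck'', and conclude that the peeling process of Definition \ref{def:imp} always terminates with $\PK{X_{b}}{A}=\emptyset$. First I would unwind Definition \ref{def:imp}(6): to exhibit a free $2$-pseudoplane it suffices to produce, for each finite union of $B$-lines $X\subseteq\mathcal{M}$, an enumeration of its lines along which every line is genuinely peeled (its $\mathcal{H}_{2}$-set is removed). I reduce this to the purely combinatorial claim that \emph{every finite nonempty family of $B$-lines in $\mathcal{M}$ contains a line $\Lambda$ with $\mathsf{I}_{X}\left(\Lambda\right)\leq2$.} Granting the claim, one builds a $2$-elimination order by repeatedly selecting a line with at most two branch points in the remaining subfamily; processing the lines in the reverse of that order, each line at its turn shares at most two $A$-points with the still-unprocessed (full) lines and is therefore peeled, and a short book-keeping check shows every $A$-point is eventually deleted, since a shared $A$-point stays protected only until the last line through it is processed. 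The infinite case is then immediate from the definition.

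For the combinatorial claim I argue by contradiction: suppose a finite family $L_{1},\dots,L_{t}$ of $B$-lines has $\mathsf{I}\left(L_{i}\right)\geq3$ for every $i$, and set $W=\bigcup_{i}L_{i}\subseteq\mathcal{M}$, so that $\delta_{\alpha}\left(W\right)\geq0$. Let $P_{1},\dots,P_{r}$ be the branch points (the $A$-points lying on at least two of the lines) and let $d_{j}\geq2$ be the number of lines through $P_{j}$. I estimate $\delta_{\alpha}\left(W\right)$ from above by adding the lines one at a time: writing $S_{i}:=L_{i}\cap\left(L_{1}\cup\dots\cup L_{i-1}\right)$ and using monotonicity of the relative predimension together with $\delta_{\alpha}\left(L_{i}\right)=\delta_{\alpha}\left(B\right)$, one gets
\[
\delta_{\alpha}\left(W\right)\leq t\,\delta_{\alpha}\left(B\right)-\sum_{i}\delta_{\alpha}\left(S_{i}\right).
\]
In the independent situation each $S_{i}$ is a disjoint, pairwise non-adjacent union of the ``backward'' branch points of $L_{i}$, so by Fact (additivity over non-adjacent pieces) $\sum_{i}\delta_{\alpha}\left(S_{i}\right)=\sum_{j}\left(d_{j}-1\right)\delta_{\alpha}\left(A\right)$, which is exactly the free-amalgam value of $W$; since free amalgamation maximises $\delta_{\alpha}$ among amalgams over a fixed vertex set, this is the worst case.

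Now I average. Counting incidences gives $\sum_{j}d_{j}=\sum_{i}\mathsf{I}\left(L_{i}\right)\geq3t$, while $d_{j}\geq2$ yields $2r\leq\sum_{j}d_{j}$, hence $\sum_{j}\left(d_{j}-1\right)=\sum_{j}d_{j}-r\geq\tfrac{1}{2}\sum_{j}d_{j}\geq\tfrac{3t}{2}$. By hypothesis $\delta_{\alpha}\left(B\right)=\delta_{\alpha}\left(A\right)+\delta_{\alpha}\left(B/A\right)<\tfrac{3}{2}\delta_{\alpha}\left(A\right)$, so
\[
\delta_{\alpha}\left(W\right)\leq t\left(\delta_{\alpha}\left(B\right)-\tfrac{3}{2}\delta_{\alpha}\left(A\right)\right)<0,
\]
contradicting $\delta_{\alpha}\left(W\right)\geq0$. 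The factor $\tfrac{1}{2}$ in the hypothesis is precisely what pairs with the bound $d_{j}\geq2$, which is why the constant is sharp for $k=2$.

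The genuinely delicate step, and the one I expect to be the main obstacle, is the inequality $\sum_{i}\delta_{\alpha}\left(S_{i}\right)\geq\sum_{j}\left(d_{j}-1\right)\delta_{\alpha}\left(A\right)$, i.e.\ that the backward branch points contribute essentially independently. The $2$-pseudoplane condition only bounds the intersection of two \emph{lines}; two distinct $A$-points lying on a common line may still share vertices, and such overlaps a priori lower $\delta_{\alpha}\left(S_{i}\right)$ and could lift the bound above the free value. To control this I would use the two structural facts at hand: every $A$-point and every line is $\leqslant_{\alpha}$-closed, and in a smooth class intersections of closed sets are closed (the amalgamation-style axiom of Definition \ref{def:smooth}, via Fact \ref{f5}), so each overlap $P_{j}\cap P_{j'}$ is itself $\leqslant_{\alpha}$-closed and, because $P_{j},P_{j'}$ sit on a common line, is forced to lie inside an \emph{earlier} branch point. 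The plan is to run a submodularity/inclusion--exclusion bookkeeping over this lattice of closed intersections and show that the overlap corrections are contributions already accounted for by earlier branch points, so that the displayed inequality (hence the whole averaging argument) survives; alternatively one shows that an overlap of two distinct branch points on a line coalesces them into a single branch point of higher multiplicity and so cannot occur while keeping $\mathsf{I}\geq3$. Once this is settled, Paragraphs 2--3 give the contradiction with no further work.
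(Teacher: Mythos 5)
Your proposal is, at its core, the same argument as the paper's own proof. The paper also runs the peeling process until it stabilizes at some $X_{i_{0}}$, notes that in a stuck configuration each of the $m$ surviving lines carries at least $3$ shared $A$-points, each lying on at least $2$ lines, and then bounds the predimension of the configuration by the free-amalgam value $m\cdot\delta_{\alpha}\left(B\right)-\tfrac{3m}{2}\cdot\delta_{\alpha}\left(A\right)$, contradicting $\delta_{\alpha}\left(B/A\right)<\tfrac{1}{2}\delta_{\alpha}\left(A\right)$; your incidence count $\sum_{j}\left(d_{j}-1\right)\geq\tfrac{1}{2}\sum_{j}d_{j}\geq\tfrac{3t}{2}$ is exactly where the paper's factor $\tfrac{3m}{2}$ comes from. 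The differences are cosmetic: you anchor the contradiction with $\delta_{\alpha}\left(W\right)\geq0$ (valid, since $\mathrm{Age}\left(\mathbf{M}^{\alpha}\right)=\mathbf{K}_{\alpha}$) where the paper uses the slightly stronger $\delta_{\alpha}\left(X_{i_{0}}\right)\geq\delta_{\alpha}\left(B\right)$ coming from closedness of a line, and your greedy extraction of a line with $\mathsf{I}\leq2$ replaces the paper's stabilization of the peeling; both anchors and both reductions work.

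Two caveats. First, a small but real slip: the peeling must be performed in the \emph{same} order as your greedy selection, not in reverse. The first line processed must be the one with at most two branch points relative to the whole family; once it is peeled it is no longer a full line, so the line selected second then has at most two branch points among the remaining full lines, and so on. Processing in reverse order, the first line you touch is the one that was selected last, and nothing bounds its number of branch points with respect to the entire family. Second, the step you single out as the main obstacle --- that $\sum_{i}\delta_{\alpha}\left(S_{i}\right)\geq\sum_{j}\left(d_{j}-1\right)\delta_{\alpha}\left(A\right)$ persists when distinct branch points on a common line may share vertices or edges --- is precisely the step the paper disposes of in a single unproved phrase (``$\tfrac{3m}{2}$ is the minimum number of recalculating $\delta_{\alpha}\left(A\right)$''). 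So your plan is no less complete than the published argument on this point; a fully rigorous write-up would either carry out the bookkeeping you sketch, or note that in the intended application (Lemma \ref{lem:main}) the distinct $A$-points inside $B$ are pairwise disjoint and non-adjacent, so that additivity of $\delta_{\alpha}$ over non-adjacent disjoint pieces gives the required lower bound directly.
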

\begin{proof}
Put $\varepsilon:=\delta_{\alpha}\left(B/A\right)$. Let $X$ be a
finite subset of $\mathcal{\mathbf{M}}^{\alpha}$. Let $\left(\Lambda_{i}:i\in I\right)$
be an enumeration. Let sets $X_{i}\subseteq X$ for $i\in\mathbb{N}$
be defined similar to Remark $\ref{rem:k-free}$-(2). Let $i_{0}$
be the minimum number such that $X_{i_{0}}=X_{i_{0}+1}$. If $\PK{X_{i_{0}}}A=\emptyset$,
then it means that $X$ is a free 2-pseudoplane. Otherwise, let $m$
be the number of $B$-lines in $X_{i_{0}}$ . Then 
\[
\delta_{\alpha}\left(B\right)\leq\delta_{\alpha}\left(X_{i_{0}}\right)\leq m\cdot\delta_{\alpha}\left(B\right)-\frac{3m}{2}\cdot\delta_{\alpha}\left(A\right),
\]
where $\frac{3m}{2}$ is the minimum number of recalculating $\delta_{\alpha}\left(A\right)$.
Then 
\[
\begin{array}{ccc}
0 & \leq & 2\left(m-1\right)\cdot\left(\delta_{\alpha}\left(B/A\right)\right)-\left(m+2\right)\cdot\delta_{\alpha}\left(A\right)\\
\left(m+2\right)\cdot\delta_{\alpha}\left(A\right) & \leq & 2\left(m-1\right).\varepsilon\\
\frac{m+2}{2\left(m-1\right)}\cdot\delta_{\alpha}\left(A\right) & \leq & \varepsilon
\end{array}
\]
It is clear that $\frac{m+2}{2\left(m-1\right)}$ is decreasing when
$m\rightarrow\infty$. Hence $\frac{\delta_{\alpha}\left(A\right)}{2}\leq\varepsilon$
which is a contraction with our assumption that $\varepsilon<\frac{\delta_{\alpha}\left(A\right)}{2}$.
Therefore $\PK{X_{i_{0}}}A=\emptyset$ and $X$ is free 2-pseudoplane.
\end{proof}
\begin{lem}
\label{lem:main} For $n\geq3$, there are $A,B\in\mathbf{K}_{\alpha}$
such that
\begin{enumerate}
\item $A\leqslant B$ and $\left|\PK BA\right|=n$;
\item $\left(A;B\right)$ is a free $2$-pseudoplane.
\end{enumerate}
\end{lem}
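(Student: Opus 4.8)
The plan is to reduce to the preceding lemma. By that lemma it suffices to produce $A,B\in\mathbf{K}_{\alpha}$ with $A\leqslant_{\alpha}B$ and $\left|\PK BA\right|=n$ for which (i) $\left(A;B\right)$ is a $2$-pseudoplane in $\mathbf{M}^{\alpha}$ and (ii) $\delta_{\alpha}\left(B/A\right)<\delta_{\alpha}\left(A\right)/2$; the lemma then promotes (i) to a free $2$-pseudoplane. So the task collapses to the construction of one finite graph $B$ with a distinguished substructure $A$ meeting these constraints. Throughout, the irrationality of $\alpha$ is what makes $\leqslant_{\alpha}$-closure rigid: for any nonempty graph $C$ we have $\delta_{\alpha}\left(C\right)=\left|C\right|-\alpha\left|\mathfrak{R}\left(C\right)\right|\neq0$, so membership in $\mathbf{K}_{\alpha}$ forces $\delta_{\alpha}\left(C\right)>0$ and no subgraph ever sits exactly on the boundary.

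First I would take $A$ to be a single vertex, so $\delta_{\alpha}\left(A\right)=1$ and (ii) becomes $\delta_{\alpha}\left(B\right)<3/2$. I would then build $B$ as a $\leqslant_{\alpha}$-closed graph of dimension $\delta_{\alpha}\left(B\right)\in\left(1,3/2\right)$ with three features: (a) exactly $n$ of its vertices are $\leqslant_{\alpha}$-closed singletons --- these are the $n$ points of $\PK BA$ --- while every other (``Steiner'') vertex lies in some denser subgraph of dimension in $\left(0,1\right)$ and hence is not closed; (b) any two distinct closed vertices $u,v$ satisfy $\cld{\left\{ u,v\right\} }B=B$; and (c) $\delta_{\alpha}\left(B\right)<3/2$. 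Since a structure of bounded dimension must absorb all $n$ closed points, $B$ is forced to be fairly dense, and I would obtain it as a balanced graph whose vertex and edge counts are tuned --- exploiting that $\mathbb{Z}+\alpha\mathbb{Z}$ is dense to land $\delta_{\alpha}\left(B\right)$ inside the narrow window $\left(1,3/2\right)$ --- with the closed points given high degree so that they contribute negatively to $\delta_{\alpha}$ and do not inflate the dimension.

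Granting such a $B$, property (i) follows cleanly from (b). If two distinct $B$-lines $B_{1},B_{2}$ in $\mathbf{M}^{\alpha}$ shared two distinct $A$-points $A_{1},A_{2}$, then $A_{1},A_{2}$ would be closed copies of $A$ contained in both; as $\leqslant_{\alpha}$-closure is absolute inside any $\leqslant_{\alpha}$-closed superset, $B_{1}=\cld{A_{1}A_{2}}{B_{1}}=\cld{A_{1}A_{2}}{B_{2}}=B_{2}$, contradicting $B_{1}\neq B_{2}$. Hence any two lines meet in at most one point, which is exactly the $2$-pseudoplane condition. Property (ii) is then the single computation $\delta_{\alpha}\left(B/A\right)=\delta_{\alpha}\left(B\right)-1<1/2=\delta_{\alpha}\left(A\right)/2$, and the preceding lemma converts the $2$-pseudoplane into a free $2$-pseudoplane, finishing the proof.

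The hard part is the construction in (a)--(c): for every $n\geq3$ and the fixed irrational $\alpha$, manufacturing a balanced graph that carries exactly $n$ pairwise-generating closed points inside dimension below $3/2$. The tension is structural --- holding $\delta_{\alpha}\left(B\right)$ bounded while admitting arbitrarily many closed points demands high density, yet $\mathbf{K}_{\alpha}$-membership forbids any subgraph from crossing $\delta_{\alpha}=0$ --- so the density must be distributed so that the Steiner part alone carries the sub-unit-dimension subgraphs while the $n$ points stay self-sufficient and jointly generic. I expect essentially all of the work, and the only delicate $\delta_{\alpha}$-bookkeeping, to live here: verifying simultaneously that exactly $n$ vertices are closed, that no two of them lie in a common proper closed subset, and that the global dimension falls in $\left(1,3/2\right)$ (with auxiliary dense vertices added when $n$ is small relative to $1/\alpha$).
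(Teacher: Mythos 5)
Your skeleton is the same as the paper's: construct a pair $\left(A;B\right)$ that is a $2$-pseudoplane with $\delta_{\alpha}\left(B/A\right)<\delta_{\alpha}\left(A\right)/2$, then invoke the preceding lemma to upgrade it to a free $2$-pseudoplane. Your closure argument (two closed points shared by two $B$-lines force the lines to coincide, because closure computed inside a $\leqslant_{\alpha}$-closed copy of $B$ agrees with closure in $\mathbf{M}^{\alpha}$) is also sound, and is the same mechanism as the paper's condition that every proper subset of $B$ containing two copies of $A$ has dimension strictly greater than $\delta_{\alpha}\left(B\right)$. The problem is everything after that: the entire content of the lemma is the existence of the finite graph $B$, and your proposal replaces it with the wish-list (a)--(c) plus the assertion that a ``balanced graph whose vertex and edge counts are tuned'' will do. Density of $\mathbb{Z}+\alpha\mathbb{Z}$ produces real numbers of the form $r-\alpha s$ in $\left(1,3/2\right)$, but it does not produce a graph realizing such a value subject simultaneously to: exactly $n$ closed vertices, all other vertices non-closed, any two closed vertices generating $B$ under closure, and $\delta_{\alpha}\left(C\right)>0$ for every nonempty $C\subseteq B$. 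The paper spends Fact \ref{fact:cofinality}, all of Lemma \ref{lem:small} (the bipartite connector $EF$ with $-\frac{1}{N}<\delta_{\alpha}\left(E/F\right)<0$ and no new triangles), and the system of inequalities $\left(*\right)$ on the quantities $\epsilon_{u}$ (one connector $X_{u}$ for every subset $u$ of the $n$ copies with $\left|u\right|\geq2$) precisely to carry out this step. Nothing in your proposal substitutes for it; as written it is a reduction plus a restatement of the remaining problem, which you yourself acknowledge contains ``essentially all of the work.''

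Moreover, your choice of $A$ as a single vertex makes the deferred construction strictly harder than the paper's, not easier. The paper takes $A=K_{3}$ exactly so that $\left|\PK BA\right|=n$ becomes a purely combinatorial condition: the connectors are built triangle-free (Lemma \ref{lem:small}, condition (2)), so the only copies of $A$ in $B$ are the $n$ designated triangles, and one never needs to decide which individual vertices are $\leqslant_{\alpha}$-closed. With $A$ a point, your condition (a) demands that every auxiliary vertex lie in a subgraph of dimension strictly between $0$ and $1$ (otherwise it is closed and inflates $\left|\PK BA\right|$), while at the same time (i) no such subgraph may contain any of the $n$ designated vertices, (ii) no subgraph of $B$ may have dimension $\leq0$, and (iii) the connectors must still carry small negative relative dimension over pairs of designated vertices so that pairs generate $B$ and $\delta_{\alpha}\left(B\right)<3/2$. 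These requirements pull against each other --- non-closedness forces local density, while $\mathbf{K}_{\alpha}$-membership and closedness of the designated vertices cap it --- and you verify none of their compatibility. So the gap is not a routine omission: the one step you postpone is the theorem, and your single-vertex variant of it would need a construction that the paper's Lemma \ref{lem:small} does not directly supply.
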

Before proving Lemma $\ref{lem:main}$, we prove some technical lemmas
(but folklore) that are used in the proof. 
\begin{fact}
\label{fact:cofinality} For an irrational $\alpha\in\left(0,1\right)$,
for every $N\in\mathbb{N}$ there are infinitely many integers $r_{i}$
and $s_{i}$ such that $-\frac{1}{N}<r_{i}-\alpha\cdot s_{i}<0$.
\end{fact}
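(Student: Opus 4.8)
The plan is to reduce the statement to the density of the orbit $\{s\alpha \bmod 1 : s \in \mathbb{N}\}$ in $[0,1)$, a standard consequence of the irrationality of $\alpha$. First I would rewrite the desired inequality $-\frac{1}{N} < r - \alpha s < 0$ in the equivalent form $0 < \alpha s - r < \frac{1}{N}$, which says precisely that the fractional part $\{\alpha s\}$ lies in the open interval $\left(0,\frac{1}{N}\right)$ and that $r = \lfloor \alpha s\rfloor$. Thus the whole problem becomes: find infinitely many $s$ whose $\alpha$-multiple has small positive fractional part.

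By Kronecker's theorem, since $\alpha$ is irrational the sequence $\left(\{s\alpha\}\right)_{s \in \mathbb{N}}$ is dense in $[0,1)$; consequently every nonempty open subinterval contains $\{s\alpha\}$ for infinitely many $s$ (a subinterval meeting the sequence in only finitely many points could be shrunk around an interior point to avoid them all, contradicting density). Applying this to $\left(0,\frac{1}{N}\right)$ yields infinitely many positive integers $s_i$ with $\{s_i\alpha\} \in \left(0,\frac{1}{N}\right)$; for each I set $r_i := \lfloor s_i\alpha\rfloor$, so that $0 < s_i\alpha - r_i = \{s_i\alpha\} < \frac{1}{N}$, which is the claim.

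If one prefers a self-contained argument avoiding Kronecker's theorem, I would instead run Dirichlet's pigeonhole: for $M \ge N$ the $M+1$ points $\{s\alpha\}$, $0 \le s \le M$, lie in $M$ subintervals of length $\frac{1}{M}$, so two of them, for some $s_1 < s_2$, share a subinterval; then $s := s_2 - s_1$ satisfies $\|s\alpha\| < \frac{1}{M} \le \frac{1}{N}$, where $\|\cdot\|$ is the distance to the nearest integer. Since $\alpha$ is irrational, $\{s\alpha\} \in \left(0,\frac{1}{N}\right) \cup \left(1-\frac{1}{N},1\right)$; in the first case $s$ itself works, and in the second case $-s$ works, because $\{-s\alpha\} = 1 - \{s\alpha\} \in \left(0,\frac{1}{N}\right)$, using that the $s_i$ are allowed to be negative integers.

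The only genuinely delicate points are the one-sidedness and the infinitude. The raw pigeonhole bound controls only $\|s\alpha\|$, i.e. the two-sided distance to $\mathbb{Z}$, so the sign flip $s \mapsto -s$ is exactly what converts an approximation from below into the required approximation from above; the density formulation sidesteps this by placing $\{s\alpha\}$ directly in $\left(0,\frac{1}{N}\right)$. For infinitude in the pigeonhole version, I would argue that if only finitely many admissible pairs existed, their values $\alpha s_i - r_i$ would have a positive minimum $\varepsilon$; choosing $N'$ with $\frac{1}{N'} < \varepsilon$ and rerunning the construction produces a new pair with $\alpha s - r < \frac{1}{N'} < \varepsilon$ lying in $\left(0,\frac{1}{N}\right)$ and distinct from all previous ones, a contradiction. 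With the density formulation this last step is immediate and can be omitted.
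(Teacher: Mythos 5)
Your proof is correct; note, though, that there is no proof in the paper to compare it against --- the paper records this statement as a folklore Fact and uses it without argument. Both of your routes are standard and valid: the reduction of $-\frac{1}{N}<r-\alpha s<0$ to $\{s\alpha\}\in\left(0,\frac{1}{N}\right)$ with $r=\lfloor s\alpha\rfloor$ is exactly right, Kronecker's density theorem gives such $s$, and your shrinking argument correctly upgrades ``every open interval meets the orbit'' to ``meets it infinitely often.'' One point worth flagging: the paper later applies the Fact (in the proof of Lemma \ref{lem:small}) with $r,s\in\mathbb{N}$ and with $r$ required to be arbitrarily large, so the one-sided approximation must be achieved with \emph{positive} $s$. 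Your Kronecker version delivers this automatically ($s_i\in\mathbb{N}$, $r_i=\lfloor s_i\alpha\rfloor\rightarrow\infty$), but the sign flip $s\mapsto -s$ in your pigeonhole variant does not: it proves the Fact as literally stated (``integers'') yet produces pairs that are useless downstream. If you want the self-contained pigeonhole proof to also serve the application, replace the sign flip by the standard multiple trick: if $\{s\alpha\}=1-\epsilon$ with $0<\epsilon<\frac{1}{N}$, take the largest $k$ with $k\epsilon<1$; then $\{ks\alpha\}=1-k\epsilon\in\left(0,\epsilon\right]\subseteq\left(0,\frac{1}{N}\right)$ and $ks$ is positive, so the required pair exists with both entries positive and, since $s$ can be taken with $\|s\alpha\|$ arbitrarily small, with $r$ arbitrarily large.
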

Write $K_{n}$ for the complete graph with $n$-vertices and $K_{n.m}$
for the complete $\left(n,m\right)$-bipartite graph.
\begin{lem}
\label{lem:small} For every $N\in\mathbb{N}$, there is $EF\in\mathbf{K}_{\alpha}$
such that 
\begin{enumerate}
\item $E\cap F=\emptyset$ and $-\frac{1}{N}<\delta_{\alpha}\left(E/F\right)<0$;
\item Every $\leqslant$-closed $K_{3}$-embedding in $EF$ is a $\leqslant$-closed
$K_{3}$-embedding in $F$. 
\end{enumerate}
\end{lem}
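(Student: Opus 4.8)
The plan is to realize $(E,F)$ as a triangle-free bipartite graph, so that the only genuine work is arithmetic: the two inequalities $-\frac1N<\delta_\alpha(E/F)<0$ come directly from Fact \ref{fact:cofinality}, while the triangle condition (2) and membership in $\mathbf{K}_\alpha$ are read off from the combinatorial shape of the construction.

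First I would fix the parameters. Applying Fact \ref{fact:cofinality} to $N$ produces integers $r,s$ with $-\frac1N<r-\alpha s<0$. For each fixed $s$ the interval $\left(\alpha s-\frac1N,\,\alpha s\right)$ has length at most $1$ and so contains at most one integer; hence among the infinitely many admissible pairs the value $s$ is unbounded, and since $r$ lies within $1$ of $\alpha s$ we also have $r\to\infty$. I therefore fix a pair with $r\ge 1$, and record that $r<\alpha s<s$, so $s>r$. Next I build the graph: let $E$ be a set of $r$ fresh vertices carrying no internal edges, let $F$ be an independent (edgeless) set of $M=s$ vertices, and join $E$ to $F$ by a disjoint union of stars with centres in $E$ and $s$ distinct leaves in $F$. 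This is possible because $s\le r+M-1$, and it uses exactly $s$ edges, all incident to $E$. Then $E\cap F=\emptyset$, $\mathfrak{R}(F)=\emptyset$, and
\[
\delta_\alpha(E/F)=\delta_\alpha(EF)-\delta_\alpha(F)=|E|-\alpha\bigl(|\mathfrak{R}(EF)|-|\mathfrak{R}(F)|\bigr)=r-\alpha s\in\left(-\tfrac1N,\,0\right),
\]
which gives property (1).

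For property (2) I observe that $EF$ is bipartite with parts $E$ and $F$, hence has no odd cycle and in particular no copy of $K_3$ at all; so the assertion that every $\leqslant$-closed $K_3$ in $EF$ already lies in $F$ holds vacuously. I would also note the robustness of this step: even if one later allows $F$ to carry edges, no triangle can meet $E$ provided $E$ stays independent and each vertex of $E$ is joined only to an independent subset of $F$, and since $F\subseteq EF$ the closedness of a triangle contained in $F$ descends from $EF$ to $F$ automatically. Finally I must check $EF\in\mathbf{K}_\alpha$, i.e. $\delta_\alpha(S)\ge 0$ for every $S\subseteq EF$. As $EF$ is a forest, any nonempty induced subgraph on $S$ with $c\ge 1$ components has exactly $|S|-c$ edges, whence
\[
\delta_\alpha(S)=|S|-\alpha\bigl(|S|-c\bigr)=(1-\alpha)|S|+\alpha c\ge 1>0,
\]
and $\delta_\alpha(\emptyset)=0$, so all subsets are non-negative.

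The point worth flagging is that the negativity of $\delta_\alpha(E/F)$ is not in tension with membership in $\mathbf{K}_\alpha$: it is a feature only of the top-level comparison of the $r$ vertices of $E$ against the $s$ edges leaving $E$ (so $F\not\leqslant EF$), and is never localized in the absolute predimension of any single subset, which the forest structure keeps strictly positive. The only delicate demand is to secure a \emph{slightly} negative $\delta_\alpha(E/F)$ while retaining both hereditary non-negativity and triangle-avoidance; the main obstacle is thus the simultaneous arithmetic control of $r$ and $s$, and this is exactly what Fact \ref{fact:cofinality} delivers, after which the star-forest realization makes every remaining verification routine.
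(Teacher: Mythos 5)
Your construction does prove the lemma as literally stated: the star--forest $EF$ lies in $\mathbf{K}_{\alpha}$ (the computation $\delta_{\alpha}(S)=(1-\alpha)|S|+\alpha c\ge 1$ for nonempty subsets of a forest is correct), $\delta_{\alpha}(E/F)=r-\alpha s\in\left(-\frac{1}{N},0\right)$ by Fact \ref{fact:cofinality}, and condition (2) holds vacuously because a bipartite graph contains no $K_{3}$. However, this is a genuinely different route from the paper's, and the difference is not cosmetic. The paper fixes $m$ with $\frac{1}{m+1}\le\alpha\le\frac{1}{m}$, takes $E\cong K_{r_{0},r_{1}}$ (complete bipartite, $r_{0}=r-m$, $r_{1}=m$), so that almost all of the $s$ edges are internal to $E$, and attaches the remaining $s-r_{0}r_{1}$ edges to just four suitably chosen vertices of a \emph{prescribed} graph $F$ subject only to $4\le|F|\le s-r_{0}r_{1}$, $\delta_{\alpha}(F')\ge\frac{1}{N}$ for all nonempty $F'\subseteq F$, and the existence of two non-adjacent pairs. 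What that flexibility buys is exactly what is consumed in the proof of Lemma \ref{lem:main}: there the construction is re-run with $F=\bigcup_{j\in i}A_{u_{j}}$ a fixed, small union of triangles, and $X_{u}\cong E$ is glued onto it with $\delta_{\alpha}(X_{u}/F)<0$ while the $A_{u_{j}}$'s remain the only triangles. Your witness cannot be adapted to that setting: it forces $|F|\ge s$ (each of the $s$ cross edges needs its own leaf) and $F$ edgeless, so condition (2) can only ever hold vacuously and $F$ can never be the triangle-bearing graph the application requires. Moreover, your closing robustness remark covers only triangle-avoidance, not membership in $\mathbf{K}_{\alpha}$, and that is where the real difficulty sits: if $F$ is a fixed small graph and $E$ is kept independent, then $\delta_{\alpha}(E/F)<0$ forces the average $E$-to-$F$ degree above $\frac{1}{\alpha}>m$, hence a positive proportion of $E$-vertices have degree at least $m+1$; taking $S$ to be those vertices together with $F$ gives $\delta_{\alpha}(S)\le|S\cap E|\left(1-\alpha(m+1)\right)+|F|\rightarrow-\infty$ as $r$ grows, so $EF\notin\mathbf{K}_{\alpha}$. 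Concentrating the edge mass inside $E=K_{r_{0},r_{1}}$ is precisely how the paper evades this obstruction, and it is the substantive content of the lemma. In short: your argument is a valid (and simpler) proof of the statement as written, but it proves it by trivializing clause (2), and it would not support the use the paper actually makes of the lemma.
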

\begin{proof}
Let $m$ be an integer such that $\frac{1}{m+1}\leq\alpha\leq\frac{1}{m}$
and using Fact $\ref{fact:cofinality}$, choose $r,s\in\mathbb{N}$
such that $r>m^{2}+\frac{m+1}{N}$ and $-\frac{1}{N}<r-\alpha\cdot s<0$.
Then $s<\frac{1}{\alpha}\cdot\left(r+\frac{1}{N}\right)\leq\left(m+1\right)\cdot\left(r+\frac{1}{N}\right)$.
Let $r_{0}:=r-m$ and $r_{1}:=m$. Consider the complete bipartite
graph $K_{r_{0},r_{1}}$. Then 
\[
0\leq r-\frac{r_{0}r_{1}}{m}\leq r-\alpha\cdot r_{0}r_{1}=\delta_{\alpha}\left(K_{r_{0},r_{1}}\right).
\]
Then it is easy also to see $s-r_{0}r_{1}=s-m\left(r-m\right)\leq\left(m+1\right)\cdot\left(r+\frac{1}{N}\right)-mr+m^{2}=r+\frac{m+1}{N}+m^{2}$.
Now we introduce a graph $EF$ that satisfies the properties of the
lemma. Let $E$ be a graph that $E\cong K_{r_{0},r_{1}}$, and take
$F\in\mathcal{K_{\alpha}}$ with $4\leq\left|F\right|\leq s-r_{0}r_{1}$
such that $\delta\left(F'\right)\geq\frac{1}{N}$ for every nonempty
$F'\subseteq F$ and moreover there are two pairs of vertices that
are not connected (for example any graph without edges). We want to
draw $\left(s-r_{0}r_{1}\right)$-many edges between $E$ and $F$
in such way that no $K_{3}$-graphs (or 3-cycles) appears in $EF$
apart from possibly those in $F$. By our assumption $F$ contains
two pair of vertices $\left(f_{k_{1}},f_{k_{2}}\right)$ and $\left(f_{k_{3}},f_{k_{4}}\right)$
that are not connected (i.e. $\neg\mathfrak{R}^{F}\left(f_{k_{1}},f_{k_{3}}\right)\wedge\neg\mathfrak{R}^{F}\left(f_{k_{2}},f_{k_{4}}\right)$).
Note that $E$ is a bipartite graph and let $E_{1}$ and $E_{2}$
be the bipartite partitions of $E$ and let $\left(e_{i}^{1}:i<r_{0}\right)$
and $\left(e_{i}^{2}:i<r_{1}\right)$ be the enumeration of elements
of $E_{1}$ and $E_{2}$; respectively. Now let
\begin{enumerate}
\item $EF\models\bigwedge_{i<r_{0}}\mathfrak{R}\left(e_{i}^{1},f_{k_{1}}\right)\wedge\bigwedge_{i<r_{1}}\mathfrak{R}\left(e_{i}^{2},f_{k_{2}}\right)$;
\item $EF\models\bigwedge_{i<m_{0}}\mathfrak{R}\left(e_{i}^{1},f_{k_{3}}\right)\wedge\bigwedge_{i<m_{1}}\mathfrak{R}\left(e_{i}^{2},f_{k_{4}}\right)$;
\end{enumerate}
where $m_{0}+m_{1}=s-r_{0}r_{1}$ and $m_{0}\leq r_{0}$ and $m_{1}\leq r_{1}$.
Note that valency of each vertice in $E$ is at least $\left(m+1\right)$
and one can check that that $\delta_{\alpha}\left(E'/F\right)\geq\delta_{\alpha}\left(E/F\right)$
for every $E'\subseteq E$. Here are more detailed calculations: suppose
$m^{*}\leq m$ and $r^{*}\leq r_{0}$ where $m^{*}+r^{*}=\left|E'\right|$.
Then 
\[
\begin{array}{ccc}
\delta_{\alpha}\left(E/E'F\right) & \leq & \left(r_{0}-r^{*}\right)+\left(m-m^{*}\right)-\left(\left(m+1\right)\cdot\left(r_{0}-r^{*}\right)+\left(m-m^{*}\right)\cdot\left(r^{*}+1\right)\right)\cdot\alpha\\
 & \leq & \left(r_{0}-r^{*}\right)+\left(m-m^{*}\right)-\left(\left(m+1\right)\cdot\left(r_{0}-r^{*}\right)+\left(m-m^{*}\right)\cdot\left(r^{*}+1\right)\right)\cdot\frac{1}{m+1}\\
 & \leq & \left(m-m^{*}\right)\left(1-\frac{r^{*}+1}{m+1}\right)
\end{array}
\]
When $r^{*}>m$ then $\delta_{\alpha}\left(E/E'F\right)<0$. Note
that $\delta_{\alpha}\left(E/F\right)=\delta_{\alpha}\left(E/E'F\right)+\delta_{\alpha}\left(E'/F\right)$
and therefore $\delta_{\alpha}\left(E'/F\right)\geq\delta_{\alpha}\left(E/F\right)$.
When $r^{*}\leq m$ then 
\[
\begin{array}{ccc}
\delta_{\alpha}\left(E'/F\right) & = & r^{*}+m^{*}-\left(r^{*}m^{*}+r^{*}+m^{*}\right)\cdot\alpha\\
 & \geq & r^{*}+m^{*}-\frac{r^{*}m^{*}+r^{*}+m^{*}}{m}\\
 & \geq & \frac{\left(m-1\right)\cdot\left(r^{*}+m^{*}\right)-r^{*}m^{*}}{m}\\
 & \geq & \frac{\left(r^{*}+m^{*}\right)\cdot\left(4\left(m-1\right)-\left(r^{*}+m^{*}\right)\right)}{4m}\\
 & \geq & 0
\end{array}
\]
The last two inequality holds because $r^{*}+m^{*}\leq2m$ and $r^{*}m^{*}\leq\left(\frac{r^{*}+m^{*}}{2}\right)^{2}$.
Therefore, in order to check that $EF\in\mathbf{K}_{\alpha}$, we
only need to check $\delta_{\alpha}\left(EF'\right)\geq0$ for every
$F'\subseteq f_{k_{1}}f_{k_{2}}f_{k_{3}}f_{k_{4}}$. By the properties
of the pre-dimension $\delta_{\alpha}\left(EF'\right)=\delta_{\alpha}\left(E/F'\right)+\delta_{\alpha}\left(F'\right)>-\frac{1}{N}+\frac{1}{N}\geq0$
and hence $EF\in\mathbf{K}_{\alpha}$. Notice that all the $K_{3}$-embeddings
of $EF$ have to be $F$.
\end{proof}

\begin{proof}
[Proof of Lemma 2] Fix $n$. We construct $A,B\in\mathbf{K}_{\alpha}$
using the free-amalgamation property of $\mathbf{K}_{\alpha}$. Let
$2\leq i\leq n$ and write $\left[n\right]^{i}:=\left\{ u\subseteq\left\{ 0,\cdots,n-1\right\} :\left|u\right|=i\right\} $.
Let $A$ be the $K_{3}$-complete graph; namely, $A$ is a graph with
three vertices $a_{i}$ with $0\leq i<3$ such that $A\models\mathfrak{R}\left(a_{0},a_{1}\right)\wedge\mathfrak{R}\left(a_{1},a_{2}\right)\wedge\mathfrak{R}\left(a_{2},a_{0}\right)$.
Then $\delta_{\alpha}\left(A\right)=3-3\cdot\alpha\geq0$ as $\alpha\in\left(0,1\right)$
and hence $A\in\mathbf{K}_{\alpha}$. We construct $B\in\mathbf{K}_{\alpha}$
such that \textbf{$B:=\dot{\bigcup}_{i\in n}A_{i}\dot{\cup}\dot{\bigcup}_{u\in\left[n\right]^{i},2\leq i\leq n}X_{u}$}
as a set, where $A_{i}$'s are isomorphic copies of $A$ with the
following properties:
\begin{enumerate}
\item $\delta_{\alpha}\left(B\right)\geq\delta_{\alpha}\left(A_{i}\right)$
for $i\in n$;
\item $\delta_{\alpha}\left(C\right)>\delta_{\alpha}\left(B\right)$ for
every $C\subsetneqq B$ that contains at least two copies of $A$,
namely $\cl C=B$;
\item $\left|\PK BA\right|=n$.
\end{enumerate}
It is clear if Conditions (1),(2) and (3) hold for $A$ and $B$,
then $\left(A;B\right)$ is a 2-pseudoplane. For Condition $\left(3\right)$
we need to make sure that there are no $\mathit{K}_{3}$-graphs in
$B$ apart from $A_{i}$'s. In order to satisfy Condition (2) for
every $2\leq i\leq n$ and $u=\left\{ u_{0},\cdots,u_{i-1}\right\} \in\left[n\right]^{i}$
we construct $X_{u}$ such that $\delta_{\alpha}\left(X_{u}/\left(\bigcup_{j\in i}A_{u_{j}}\right)\right)<0$.
We now check the conditions for $X_{u}$ in order to get the properties
that are needed. Let $\epsilon_{u}:=-\delta_{\alpha}\left(X_{u}/\left(\bigcup_{j\in i}A_{u_{j}}\right)\right)$.
Put $\epsilon_{0}:=\text{min}_{2\leq i\leq n}\left\{ \epsilon_{u}:u\in\left[n\right]^{i}\right\} $
and $\epsilon_{1}:=\text{max}_{2\leq i\leq n}\left\{ \epsilon_{u}:u\in\left[n\right]^{i}\right\} $
which are non-zero. Here are the calculations for finding sufficient
conditions
\[
n\cdot\delta_{\alpha}\left(A\right)-\left(2^{n}-n-1\right)\cdot\epsilon_{1}\leq\delta_{\alpha}\left(B\right)=n\cdot\delta_{\alpha}\left(A\right)-\sum_{2\leq i\leq n}\sum_{u\in\left[n\right]^{i}}\epsilon_{u}\leq n\cdot\delta_{\alpha}\left(A\right)-\left(2^{n}-n-1\right)\cdot\epsilon_{0}.
\]
 In order to have $\left(A;B\right)$ satisfying Condition (1) we
demand
\[
\begin{array}{ccc}
n\cdot\delta_{\alpha}\left(A\right)-\left(2^{n}-n-1\right)\cdot\epsilon_{1} & \geq & \delta_{\alpha}\left(A\right)\\
\left(n-1\right)\cdot\delta_{\alpha}\left(A\right) & \geq & \left(2^{n}-n-1\right)\cdot\epsilon_{1}\\
\frac{n-1}{\left(2^{n}-n-1\right)}\cdot\delta_{\alpha}\left(A\right) & \geq & \epsilon_{1}
\end{array}
\]
Suppose $2\leq k<n$ and let $c\in\left[n\right]^{k}$. Then 
\[
k\cdot\delta_{\alpha}\left(A\right)-\left(2^{k}-k-1\right)\cdot\epsilon_{1}\leq\delta_{\alpha}\left(\bigcup_{i\in c}A_{i}\cup\bigcup_{2\leq i\leq n}\bigcup_{u\in\left[n\right]^{i},u\subset c}X_{u}\right)=k\cdot\delta_{\alpha}\left(A\right)-\sum_{2\leq i\leq n}\sum_{u\in\left[n\right]^{i},u\subseteq c}\epsilon_{u}.
\]
We demand the following in order to obtain Condition (2)
\[
\begin{array}{ccc}
k\cdot\delta_{\alpha}\left(A\right)-\left(2^{k}-k-1\right)\cdot\epsilon_{1} & > & \delta_{\alpha}\left(B\right)\end{array}.
\]
Note that $k\cdot\delta_{\alpha}\left(A\right)-\left(2^{k}-k-1\right)\cdot\epsilon_{1}$
is increasing as $k\rightarrow n-1$. Hence, we only need to satisfy:

\[
\begin{array}{ccc}
2\cdot\delta_{\alpha}\left(A\right)-\left(2^{2}-2-1\right)\cdot\epsilon_{1} & > & \delta_{\alpha}\left(B\right)\end{array}.
\]
Then we demand

\[
\begin{array}{ccc}
2\cdot\delta_{\alpha}\left(A\right)-\epsilon_{1} & \geq & n\cdot\delta_{\alpha}\left(A\right)-\left(2^{n}-n-1\right)\cdot\epsilon_{0}\\
\left(2^{n}-n-1\right)\cdot\epsilon_{0}-\epsilon_{1} & \geq & \left(n-2\right)\cdot\delta_{\alpha}\left(A\right)\\
\epsilon_{0} & \geq & \frac{n-2}{2^{n}-n-2}\cdot\delta_{\alpha}\left(A\right)+\xi
\end{array}
\]
where $\xi=\frac{\epsilon_{1}-\epsilon_{0}}{2^{n-1}-n-2}$. Then all
these conditions together demand 

\[
\begin{array}{ccc}
\xi+\frac{n-2}{2^{n}-n-2}\cdot\delta_{\alpha}\left(A\right)\leq\epsilon_{0}\leq\epsilon_{u}\leq\epsilon_{1}\leq\frac{n-1}{2^{n}-n-1}\cdot\delta_{\alpha}\left(A\right) &  & \left(*\right)\end{array}
\]
for every $u\in\left[n\right]^{i}$ where $2\leq i\leq n$. One can
check that when $n\geq3$ the inequalities above are valid. We choose
$\epsilon_{u}$'s in such a way that $\xi$ is ignorable
\[
\frac{n-1}{\left(2^{n}-n-1\right)}\cdot\delta_{\alpha}\left(A\right)-\epsilon_{u}\leq\frac{n-1}{\left(2^{n}-n-1\right)}\cdot\delta_{\alpha}\left(A\right)-\epsilon_{0}\leq\frac{1}{C}
\]
for some $C\in\mathbb{N}$. Now we use Lemma $\ref{lem:small}$ and
Fact $\ref{fact:cofinality}$ we construct $X_{u}$ such that $\epsilon_{u}=\delta_{\alpha}\left(X_{u}/\left(\bigcup_{j\in i}A_{u_{j}}\right)\right)$
satisfies $\left(*\right)$. Using Fact $\ref{fact:cofinality}$ for
every $u\in\left[n\right]^{i}$ choose $r_{u},s_{u}$ such that $-\frac{n-1}{\left(2^{n}-n-1\right)}\cdot\delta_{\alpha}\left(A\right)<r_{u}-\alpha\cdot s_{u}\leq-\left(\frac{n-1}{\left(2^{n}-n-1\right)}-\frac{1}{C}\right)\cdot\delta_{\alpha}\left(A\right)$
and let $\epsilon_{u}:=-\left(r_{u}-\alpha\cdot s_{u}\right)$. Moreover,
we choose $r_{u}$ and $s_{u}$'s such that $\xi$ is ignorable and
the followings hold
\begin{enumerate}
\item $r_{u}-s_{u}\cdot\alpha=-\epsilon_{u}<0$;
\item $r_{u}-\left(s_{u}-r_{u}\right)\alpha=-\epsilon_{u}+r_{u}\cdot\alpha\geq0$.
\end{enumerate}
Note that Condition ($2$) is satisfiable because $r_{u}$ can be
chosen arbitrarily large. We construct $X_{u}$ every $u\in\left[n\right]^{i}$
such that $\left|X_{u}\right|=r_{u}$, $\delta_{\alpha}\left(X_{u}/\left(\bigcup_{j\in i}A_{u_{j}}\right)\right)=-\epsilon_{u}$
and $X_{u}\cup\left(\bigcup_{j\in i}A_{u_{j}}\right)$ has only $i$-many
$K_{3}$-subgraphs, namely only $A_{u_{i}}$'s. Using the same idea
that is used in the proof of Lemma $\ref{lem:small}$ one can find
$K_{3}$-free graphs $E$ of arbitrary the required size in such that
$\delta_{\alpha}\left(E/\left(\bigcup_{j\in i}A_{u_{j}}\right)\right)<0$.
By modifying the edges drawn between $E$ and $F=\bigcup_{j\in i}A_{u_{j}}$
in Lemma $\ref{lem:small}$, we can choose $X_{u}\cong E$ to be a
bipartite graph of size $r_{u}$ such that $X_{u}\in\mathbf{K}_{\alpha}$
and $\left|\mathfrak{R}\left(X_{u}\right)\right|=s_{u}-r_{u}$. Note
that $A_{u_{j}}$'s for $j\in i$ are disjoint. Note that $F$ satisfies
the condition of Lemma $\ref{lem:small}$. We modify the drawn edges
between $E$ and $F$ is such a that for each $j\in i$ there is at
least one edges between $X_{u}$ and $A_{u_{j}}$. It is easy draw
the edges in such way that no $K_{3}$-graph is embeddable in $X_{u}\cup\left(\bigcup_{j\in i}A_{u_{j}}\right)$
apart from $\bigcup_{j\in i}A_{u_{j}}$. Hence $\left|\PK{X_{u}\cup\left(\bigcup_{j\in i}A_{u_{j}}\right)}A\right|=i$.
Now let $B$ be the free-amalgamation of all these $X_{u}\cup\left(\bigcup_{j\in i}A_{u_{j}}\right)$
over $\dot{\bigcup}_{l\in n}A_{l}$. Then $\left|\PK BA\right|=n$
and Conditions (1-3) hold for $B$ and $A$ hence they are $2$-pseudoplane.
We now want to show embeddings of $B$ and $A$ form free 2-pseudoplane.
By Lemma we need to check whether $\delta_{\alpha}\left(B/A\right)=\delta_{\alpha}\left(B\right)-\delta_{\alpha}\left(A\right)<\frac{1}{2}\cdot\delta_{\alpha}\left(A\right)$
holds. As we have chosen 

\[
\left(\frac{n-1}{\left(2^{n}-n-1\right)}-\frac{1}{C}\right)\cdot\delta_{\alpha}\left(A\right)\leq\varepsilon_{u}\leq\frac{n-1}{\left(2^{n}-n-1\right)}\cdot\delta_{\alpha}\left(A\right)
\]
Then 
\[
\begin{array}{ccc}
2\cdot\delta_{\alpha}\left(B\right)-3\cdot\delta_{\alpha}\left(A\right) & < & 2\left(n\cdot\delta_{\alpha}\left(A\right)-\left(2^{n}-n-1\right)\cdot\left(\frac{n-1}{\left(2^{n}-n-1\right)}-\frac{1}{C}\right)\cdot\delta_{\alpha}\left(A\right)\right)-3\cdot\delta_{\alpha}\left(A\right)\\
 & < & 2\delta_{\alpha}\left(A\right)-3\delta_{\alpha}\left(A\right)+2\cdot\frac{2^{n}-n-1}{C}\cdot\delta_{\alpha}\left(A\right)\\
 & < & 2\cdot\frac{2^{n}-n-1}{C}\cdot\delta_{\alpha}\left(A\right)-\delta_{\alpha}\left(A\right)
\end{array}
\]
If we choose $C>2\cdot\left(2^{n}-n-1\right)$ then we have $\delta_{\alpha}\left(B/A\right)<\frac{1}{2}\cdot\delta_{\alpha}\left(A\right)$
and therefore embeddings of $B$ and $A$ form a free 2-pseudoplane.
\end{proof}
Now using Theorem $\ref{thm:2-pseudo}$ we obtain the following.
\begin{cor}
\label{cor:-is-not} $\Aut\left(\mathcal{\mathbf{M}^{\alpha}}\right)$
is not amenable.
\end{cor}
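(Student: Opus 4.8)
The plan is to assemble the three ingredients developed above, since the corollary is a direct synthesis. First I would invoke Theorem \ref{thm:mat} with $k=2$. This produces an integer $n>2$, hence $n\geq 3$, together with an $n\times n$-matrix $\mathrm{X}$ with entries in $\left\{0,1\right\}$ such that $\mathrm{X}$ satisfies the $2$-configuration exhibiting condition but fails the convex Ramsey property. Fixing this value of $n$ is the crucial bookkeeping step: the dimension produced by the probabilistic argument must coincide with the number of $A$-points in a copy of $B$.

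Second, I would feed this same $n\geq 3$ into Lemma \ref{lem:main}, which yields structures $A,B\in\mathbf{K}_{\alpha}$ with $A\leqslant_{\alpha}B$, with $\left|\PK BA\right|=n$, and such that $\left(A;B\right)$ forms a free $2$-pseudoplane in $\mathbf{M}^{\alpha}$. Here I would recall that $\left(\mathbf{K}_{\alpha},\leqslant_{\alpha}\right)$ is a smooth class with the free-amalgamation property, and in particular has AP, so that $\mathbf{M}^{\alpha}$ is genuinely the $\left(\mathbf{K}_{\alpha},\leqslant_{\alpha}\right)$-generic structure and Theorem \ref{thm:2-pseudo} is applicable to it.

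Finally, I would apply Theorem \ref{thm:2-pseudo} to these $A$, $B$, $n$, and the matrix $\mathrm{X}$, viewed as an $n\times m$-matrix with $m=n$. Since $\left(A;B\right)$ is a free $2$-pseudoplane with $\left|\PK BA\right|=n$, and $\mathrm{X}$ satisfies the $2$-configuration exhibiting condition while failing the convex Ramsey property, the theorem immediately delivers that $\Aut\left(\mathbf{M}^{\alpha}\right)$ is not amenable, which is exactly the assertion.

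I do not expect a genuine obstacle in this final step. The only point demanding care is that the matrix dimension $n$ produced by Theorem \ref{thm:mat} must agree with the pseudoplane size supplied by Lemma \ref{lem:main}; this matching is automatic, because Lemma \ref{lem:main} holds uniformly for every $n\geq 3$ while Theorem \ref{thm:mat} forces $n>2$, so the two ranges overlap and any $n$ returned by the probabilistic construction is admissible for the geometric construction.
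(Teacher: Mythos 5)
Your proposal is correct and follows exactly the route the paper intends: the corollary is proved by combining Theorem \ref{thm:mat} (with $k=2$), Lemma \ref{lem:main}, and Theorem \ref{thm:2-pseudo}, with the smoothness and AP of $\left(\mathbf{K}_{\alpha},\leqslant_{\alpha}\right)$ ensuring the main theorem applies to $\mathbf{M}^{\alpha}$. Your attention to the ordering of quantifiers --- fixing $n$ from the matrix theorem first and then invoking the lemma, which holds uniformly for all $n\geq 3$ --- is precisely the bookkeeping the paper leaves implicit.
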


\section{Further remarks and questions}
In this paper we have shown the automorphism group of certain generic structures are not amenable.  The following question is a natural question to ask

\begin{question}
	Is there an example of a smooth class $\left(\mathcal{C},\leqslant\right)$
	with HP and JEP such the notion of closedness notion of $\leqslant$ is strictly
	stronger that $\subseteq$, and the automorphism group of the $\left(\mathcal{C},\leqslant\right)$-generic
	structure is amenable?
\end{question}

 Note that in the question above we asked $\leqslant$ to be strictly
 stronger that $\subseteq$ to insure the generic structure is not the usual  Fra\"iss\'e-limit structure and the reason is that in  Fra\"iss\'e-limit structures there is no pair that form a free-pseudoplane. It is interesting to find a border line that also grasps the geometric behavior of the algebraic closure in the generic and connects it with the amenability of the automorphism group.   
 
 It is also interesting to determine whether or not in the automorphism group of the generic structures that we proved the non amenability for one can find a copy of $\mathrm F_2$ that acts freely on structure  or a substructure of the generic model (see \cite{MR3417738}). Another property to verify is Kazhdan Property (T) and its connection to amenability in these cases, although it is known there is no direct connection between amenability and  Kazhdan Property (T) (see \cite{MR2929072} for more details).

\bibliographystyle{amsplain}
\providecommand{\bysame}{\leavevmode\hbox to3em{\hrulefill}\thinspace}
\providecommand{\MR}{\relax\ifhmode\unskip\space\fi MR }
% \MRhref is called by the amsart/book/proc definition of \MR.
\providecommand{\MRhref}[2]{%
  \href{http://www.ams.org/mathscinet-getitem?mr=#1}{#2}
}
\providecommand{\href}[2]{#2}

%\bibliography{ALL}

\end{document}